\newtheorem{theorem}{Theorem}
\newtheorem{lemma}[theorem]{Lemma}
\newtheorem{proposition}[theorem]{Proposition}
\newtheorem{corollary}[theorem]{Corollary}
\newtheorem{remark}[theorem]{Remark}
\theoremstyle{exercise}
\theoremstyle{definition}
\newtheorem{definition}[theorem]{Definition}
\newtheorem{conjecture}[theorem]{Conjecture}
\numberwithin{equation}{section}
\numberwithin{theorem}{section}
\newenvironment{proofof}[1]{\noindent{\bf Proof of #1.}\hspace*{1em}}{\qed\bigskip}
\newcommand{\intav}[1]{\mathchoice {\mathop{\vrule width 6pt height 3 pt depth  -2.5pt
\kern -8pt \intop}\nolimits_{\kern -6pt#1}} {\mathop{\vrule width
5pt height 3  pt depth -2.6pt \kern -6pt \intop}\nolimits_{#1}}
{\mathop{\vrule width 5pt height 3 pt depth -2.6pt \kern -6pt
\intop}\nolimits_{#1}} {\mathop{\vrule width 5pt height 3 pt depth
-2.6pt \kern -6pt \intop}\nolimits_{#1}}}
\newcommand{\intavl}[1]{\mathchoice {\mathop{\vrule width 6pt height 3 pt depth  -2.5pt
\kern -8pt \intop}\limits_{\kern -6pt#1}} {\mathop{\vrule width 5pt
height 3  pt depth -2.6pt \kern -6pt \intop}\nolimits_{#1}}
{\mathop{\vrule width 5pt height 3 pt depth -2.6pt \kern -6pt
\intop}\nolimits_{#1}} {\mathop{\vrule width 5pt height 3 pt depth
-2.6pt \kern -6pt \intop}\nolimits_{#1}}}
\newcommand{\R}{\mathbb{R}}
\newcommand{\eps}{\epsilon}
\renewcommand{\P}[1]{{\mathbb{P}}\left[{#1}\right]}
\newcommand{\EE}[2]{{\mathbb{E}}\left[{#1}|{#2}\right]}
\begin{document}

\title[Generalized P\'{o}lya's Urn: convergence at linearity]{A generalized P\'{o}lya's Urn with graph based interactions: convergence at linearity}

\author[Jun Chen]{Jun Chen}
\address{Division of the Humanities and Social Sciences, California Institute of Technology, Pasadena, CA 91125.}
\email{chenjun851009@gmail.com}

\author[Cyrille Lucas]{Cyrille Lucas}
\address{Weizmann Institute of Science, Israel and Universit\'{e} Paris Diderot, France.}
\email{lucas@math.univ-paris-diderot.fr}



\subjclass[2010]{Primary: 60K35. Secondary: 37C10.}

\date{\today}
\keywords{Dynamical system approach, graph based interactions, ordinary differential equations, P\'{o}lya's urn, stochastic approximations}

\begin{abstract}
We consider a special case of the generalized P\'{o}lya's urn model introduced in \cite{benjamini2012generalized}. Given a finite connected
graph $G$, place a bin at each vertex. Two bins are called a pair if they
share an edge of $G$. At discrete times, a ball is added to each pair of bins.
In a pair of bins, one of the bins gets the ball with probability proportional to its current
number of balls. A question of essential interest for the model is to understand the limiting behavior of the proportion of balls in the bins
for different graphs $G$. In this paper, we present two results regarding this question.
If $G$ is not balanced-bipartite, we prove that the proportion of balls converges to some deterministic point $v=v(G)$ almost
surely. If $G$ is regular bipartite, we prove that the proportion of balls converges to a point in some explicit interval almost surely. The question of convergence remains open in the case when $G$ is non-regular balanced-bipartite (see Remark \ref{final_remark}).
\end{abstract}

\maketitle

\section{Introduction and statement of results}
As a special case of the generalized P\'{o}lya's urn model introduced
in \cite{benjamini2012generalized}, the model with linear
reinforcement is defined as follows.
Let $G=(V,E)$ be a finite connected graph with $V=[m]=\{1,\ldots,m\}$ and $|E|=N$, and assume
that on each vertex $i$ there is a bin initially with $B_i(0)\ge 1$ balls. Consider the random
process of adding $N$ balls to these bins at each step, according to the following
law: if the numbers of balls after step $n-1$ are $B_1(n-1),\ldots,B_m(n-1)$, step $n$ consists
of adding, for each edge $\{i,j\}\in E$, one ball either to $i$ or to $j$, and the probability that the ball
is added to $i$ is
\begin{align}\label{transition probability}
\P{i\text{ is chosen among }\{i,j\}\text{ at step }n}=\dfrac{B_i(n-1)}{B_i(n-1)+B_j(n-1)}\,\cdot
\end{align}
Let $N_0=\sum_{i=1}^m B_i(0)$ denote the initial total number of balls, and let
\begin{align}\label{definition model}
x_i(n)=\dfrac{B_i(n)}{N_0+nN}\,,\ \ \ i\in[m],
\end{align}
be the proportion of balls at vertex $i$ after step $n$. 
Let $x(n)=(x_1(n),\ldots,x_m(n))$. We are interested in the
limiting behavior of $x(n)$ for different graphs $G$.

We call $G$ {\it balanced-bipartite} if there is a bipartition $V=A\cup B$ with $\#A=\#B$.
In \cite{benjamini2012generalized}, the authors proved that when $G$ is not balanced-bipartite, the limit of $x(n)$ exists, and it can
only take finitely many possible values.
Here, we improve this result and prove that almost surely the limit of $x(n)$ is in fact one deterministic point,
thus confirming the conjecture in Section 11 of \cite{benjamini2012generalized}.

\begin{theorem}  \label{not_balanced_bipartite}
Let $G$ be a finite, connected, not balanced-bipartite graph. Then there exists a deterministic point $v=v(G)$
such that $x(n)$ converges to $v$ almost surely.
\end{theorem}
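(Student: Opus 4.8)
\medskip

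\noindent\textbf{Outline of the proof.}
The plan is to realise $(x(n))$ as a stochastic approximation whose mean field is a replicator dynamics, to exhibit for it a Lyapunov function that is \emph{strictly} concave precisely when $G$ is not balanced-bipartite, and then to rule out every equilibrium but the unique maximiser. Writing $T_n=N_0+nN$ and $\gamma_n=1/T_n$, a short computation from \eqref{definition model} and \eqref{transition probability} gives
\begin{align*}
x(n)-x(n-1)=\gamma_n\bigl(F(x(n-1))+U_n\bigr),\qquad
F_i(x)=x_i\bigl(f_i(x)-N\bigr),\quad f_i(x)=\sum_{j\sim i}\frac{1}{x_i+x_j},
\end{align*}
where $U_n=\Delta B(n)-\mathbb{E}[\Delta B(n)\mid\mathcal{F}_{n-1}]$ is a bounded martingale difference, $\sum_n\gamma_n=\infty$, and $\sum_n\gamma_n^2<\infty$. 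Since $\bar f(x):=\sum_k x_k f_k(x)=\sum_{\{k,l\}\in E}1=N$ on the simplex $\Delta=\{x\ge 0:\sum_i x_i=1\}$, the field is of replicator type, $F_i(x)=x_i(f_i(x)-\bar f(x))$, so $\Delta$ is invariant. A deterministic bound keeps the process off the worst of the boundary: counting the balls of one edge, $B_i(n)+B_j(n)\ge B_i(0)+B_j(0)+n\ge n+2$, so for all large $n$ the vector $x(n)$ lies in the compact set $\Delta'=\{x\in\Delta: x_i+x_j\ge 1/(2N)\ \text{for every }\{i,j\}\in E\}$, which is forward invariant for the flow of $F$ and on which $F$ is smooth; the standard stochastic-approximation machinery therefore applies on $\Delta'$.

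Set $\Phi(x)=\sum_{\{i,j\}\in E}\log(x_i+x_j)$, so $\partial_i\Phi=f_i$. Along the flow, $\tfrac{d}{dt}\Phi(x)=\sum_i f_i\,x_i(f_i-N)=\sum_i x_if_i^2-N^2=\mathrm{Var}_x(f)\ge 0$, with equality iff $f_i$ is constant on $\mathrm{supp}(x)$, i.e.\ iff $F(x)=0$; thus $\Phi$ is a strict Lyapunov function taking only finitely many values on the equilibrium set (the equilibria on each face of $\Delta$ being a convex set on which $\Phi$ is constant), so every internally chain transitive set of the flow on $\Delta'$ is a connected subset of the equilibrium set on which $\Phi$ is constant. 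Moreover $\nabla^2\Phi(x)=-\sum_{\{i,j\}\in E}(x_i+x_j)^{-2}(e_i+e_j)(e_i+e_j)^{\mathsf T}$, so $v^{\mathsf T}\nabla^2\Phi(x)\,v=-\sum_{\{i,j\}\in E}(v_i+v_j)^2/(x_i+x_j)^2$, which vanishes only when $v_i=-v_j$ along every edge; on a connected graph such a $v$ must equal $c(\mathbf 1_A-\mathbf 1_B)$ for a $2$-colouring $V=A\sqcup B$, so $v=0$ unless $G$ is bipartite, and even then $v$ lies in the tangent hyperplane $\{\sum_i v_i=0\}$ only if $\#A=\#B$. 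Hence $\Phi$ is \emph{strictly} concave on $\Delta$ exactly because $G$ is connected and not balanced-bipartite; it then attains its maximum over the compact set $\Delta$ at a \emph{unique} point $v=v(G)$, and since $\Phi(v)>-\infty$, no edge has both ends outside $\mathrm{supp}(v)$, i.e.\ $\mathrm{supp}(v)$ is a vertex cover.

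To conclude, recall (from Bena\"{\i}m's theory, or directly from \cite{benjamini2012generalized}) that the limit set of $(x(n))$ is almost surely one of these internally chain transitive sets. Fix an equilibrium $p\ne v$ in it. The deterministic bound forces $\mathrm{supp}(p)$ to be a vertex cover, so $f_i(p)<\infty$ for $i\notin\mathrm{supp}(p)$; the equilibrium equations give $f_i(p)=N$ for $i\in\mathrm{supp}(p)$, hence if we also had $f_i(p)\le N$ for every $i\notin\mathrm{supp}(p)$, the Karush--Kuhn--Tucker conditions for maximising the concave function $\Phi$ over $\Delta$ would hold at $p$, forcing $p=v$ --- a contradiction. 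So some $i\notin\mathrm{supp}(p)$ satisfies $f_i(p)>N$, and since $\dot x_i=x_i(f_i(x)-N)$ with $f_i(x)-N$ bounded below by a positive constant for $x$ near $p$, the point $p$ is linearly unstable along $e_i$. The classical non-convergence results for stochastic approximation then give $\mathbb{P}(x(n)\to p)=0$; since the limit exists and is an equilibrium, $x(n)\to v$ almost surely.

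The soft structure above is routine; the real work is localised near $\partial\Delta$. When an induced subgraph $G[S]$ with $S\subsetneq V$ happens to be balanced-bipartite, the face $\{x_i=0:i\notin S\}$ carries a whole \emph{segment} of equilibria rather than an isolated point, and one must show this entire segment is transversally repelling --- producing, uniformly along it, some coordinate $i\notin S$ with $f_i>N$ --- so that it cannot be the limit set. Second, the non-convergence step needs the form of those theorems valid when the transverse noise degenerates like $x_i\to 0$ on approach to the unstable face (the generalised-Friedman-urn regime), rather than the textbook statement with uniformly exciting noise. I expect these two points to be the technical heart of the proof.
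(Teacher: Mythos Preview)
Your proposal is correct and largely parallels the paper: the same stochastic-approximation setup, the same Lyapunov function (your $\Phi$ differs from the paper's $L$ only by a constant on $\Delta$), the same strict-concavity criterion, and the same non-convergence to unstable equilibria. The latter is indeed delicate in the way you anticipate, and is handled by Lemma~5.2 of \cite{benjamini2012generalized} (quoted here as Lemma~\ref{zero probability unstable}), whose proof is tailored to the degenerate-noise situation at the boundary. Where you diverge from the paper is in the argument for uniqueness of the non-unstable equilibrium. The paper (Lemma~\ref{unique_stable_equilibrium}) constructs a \emph{second} Lyapunov function $H(v)=\sum_{i\in P}w_i\log v_i$ and shows, via a separate convexity inequality (Lemma~\ref{deterministic_inequality}), that every interior orbit of the ODE converges to $w$. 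Your KKT observation is shorter: the conditions \eqref{zero_value_set} characterising a non-unstable equilibrium are precisely the KKT conditions for maximising the concave function $L$ over $\Delta$, and strict concavity then gives a unique maximiser. This bypasses $H$ and Lemma~\ref{deterministic_inequality} entirely.

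Your anticipated difficulty about segments of equilibria on proper faces, however, does not arise, and you should drop it. The quadratic form $v^{\mathsf T}\nabla^2 L(x)\,v=-\tfrac{1}{N}\sum_{\{i,j\}\in E}(v_i+v_j)^2/(x_i+x_j)^2$ runs over \emph{all} edges of $G$, not only those of $G[S]$; on the tangent space of a proper face $\Delta_S$ one has $v_i=0$ for every $i\notin S$, and since $G$ is connected the relations $v_i+v_j=0$ along every edge propagate this to $v=0$. Hence $L|_{\Delta_S}$ is strictly concave for every $S\subsetneq[m]$, irrespective of whether $G[S]$ happens to be balanced-bipartite, and $\Lambda$ is genuinely finite (this is Corollary~\ref{finite_limit_points}; see also the proof of Corollary~\ref{no_interior_equilibrium}). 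So the first of your two ``technical hearts'' is empty; once you invoke the non-convergence result from \cite{benjamini2012generalized} for the second, the proof is complete.
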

In the proof of this theorem, we will give a characterization of $v(G)$, that enables us to explicitly compute its value for some graphs,
like regular nonbipartite graphs, star graphs, and other small graphs. Hence, Theorem \ref{not_balanced_bipartite} will imply Theorems 1.1(a)
and 1.5(a) of \cite{benjamini2012generalized}.

When $G$ is regular bipartite, the authors in \cite{benjamini2012generalized} proved that the limit set of $x(n)$ is contained in
$\Omega$ where $\Omega$ is the subset of the $(m-1)$-dimensional closed simplex defined as follows:
if $V=A\cup B$ is the bipartition of $G$, then
\begin{align}\label{definition Omega}
\Omega=\{(x_1,\ldots,x_m):\exists\,p,q\ge 0,p+q=2/m,\text{ s.t. }x_i=p\text{ on }A,x_i=q\text{ on }B\}.
\end{align}
Nevertheless, the question whether $x(n)$ has a limit was left open (see Problem 11.2 in \cite{benjamini2012generalized}).
The following theorem provides the answer to this question.

\begin{theorem}  \label{bipartite}
Let $G$ be a finite, regular and bipartite graph. Then $x(n)$ almost surely converges to a point in $\Omega$.
\end{theorem}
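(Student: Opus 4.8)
The plan is to realise $x(n)$ as a stochastic approximation whose driving vector field vanishes on all of $\Omega$, and then to split the motion into a component normal to $\Omega$ (attracted to $\Omega$ at a summable rate) and a component along $\Omega$ (which behaves like a convergent martingale). Set $\gamma_n=1/(N_0+nN)$ and $F_i(x)=\sum_{j\sim i}\frac{x_i}{x_i+x_j}-Nx_i$. From \eqref{transition probability} and \eqref{definition model} one gets $x(n)=x(n-1)+\gamma_n\bigl(F(x(n-1))+U_n\bigr)$, where $(U_n)$ is a bounded martingale-difference sequence for the natural filtration and $\sum_n\gamma_n^2<\infty$. Using $\deg i\equiv d$, $N=dm/2$ and $p+q=2/m$, a direct check shows $F\equiv 0$ on $\Omega$, so $\Omega$ is a one-dimensional segment of equilibria of $\dot x=F(x)$. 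By the result of \cite{benjamini2012generalized} quoted above, the limit set of $x(n)$ is contained in the compact connected set $\Omega$, hence $\dist{x(n)}{\Omega}\to0$ almost surely; it remains to rule out oscillation of $x(n)$ along $\Omega$.

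The structural input is that $\Omega$ is \emph{normally attracting}: I would show that at every $v$ in the relative interior of $\Omega$ (i.e. $p,q>0$) the Jacobian $DF(v)$ has the tangent line to $\Omega$ as its kernel and all remaining eigenvalues with strictly negative real part. This is a finite eigenvalue computation that exploits regularity and bipartiteness of $G$. Granting it, the standard estimates for stochastic approximation near a \emph{normally hyperbolic} attracting manifold — iterating the contraction bound for $\dist{x(n)}{\Omega}^2$ and using $\sum_n\gamma_n^2<\infty$ — give a polynomial rate: there is $\beta>0$ with $\dist{x(n)}{\Omega}=O(\gamma_n^{\beta})$ almost surely, and in particular $\sum_n\gamma_n\,\dist{x(n)}{\Omega}<\infty$ almost surely.

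With the rate in hand, use the globally defined linear coordinate $\theta(x)=\sum_{i\in A}x_i-\sum_{i\in B}x_i$, which parametrises $\Omega$. A short computation gives $\theta(n)=\theta(n-1)+\gamma_n\bigl(F_\theta(x(n-1))+\widetilde U_n\bigr)$, where $F_\theta=\sum_{i\in A}F_i-\sum_{i\in B}F_i$ and $\widetilde U_n$ is twice the sum over the edges of the centred conditional indicators ``the $A$-endpoint receives the ball'', a bounded martingale difference with conditional variance $\le N$. Since $F_\theta$ vanishes on $\Omega$ and is Lipschitz near the relative interior, $|F_\theta(x)|\le C\,\dist{x}{\Omega}$ there. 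Writing $\theta(n)=\theta(0)+\sum_{k\le n}\gamma_k\widetilde U_k+\sum_{k\le n}\gamma_k F_\theta(x(k-1))$, the martingale $\sum_k\gamma_k\widetilde U_k$ converges almost surely because $\sum_k\gamma_k^2<\infty$, and the remaining series converges absolutely by the rate bound; hence $\theta(n)$ converges almost surely. Combined with $\dist{x(n)}{\Omega}\to0$, this forces $x(n)$ to converge to the unique point of $\Omega$ at the limiting $\theta$-level, which is the assertion.

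The delicate points are the normal-attractivity/eigenvalue computation together with the accompanying rate estimate, and one boundary issue that must be dispatched first: near $\partial\Omega$ (some $p$ or $q$ equal to $0$) the field $F$ is not Lipschitz and the spectral gap degenerates, so the argument above only applies once we know the limit lies in the relative interior of $\Omega$ and the iterate eventually stays in a fixed compact sub-segment where all constants are uniform. I would therefore also prove a separate lemma ruling out convergence to $\partial\Omega$ — showing that the ``underdog'' side of the bipartition keeps a proportion bounded away from $0$, in the spirit of the classical two-bin P\'olya urn whose limiting fraction lies almost surely in $(0,1)$. That boundary lemma, rather than the stochastic-approximation part, is where I expect the main work to lie.
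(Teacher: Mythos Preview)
Your outline is coherent, but it takes a detour the paper does not need. The statement only asks that $x(n)$ converge to \emph{some} point of $\Omega$; convergence to an endpoint $v_{\pm\infty}$ is perfectly acceptable. The paper exploits this by a simple dichotomy: either the interpolated process $x(t)$ converges to one of $v_{\pm\infty}$ (in which case there is nothing to prove), or it does not, in which case the limit set --- already known to lie in $\Omega$ --- contains a point of $\mathrm{int}(\Omega)$, so $x(t)$ visits a fixed neighbourhood $U$ of a compact sub-interval $J\subset\mathrm{int}(\Omega)$ infinitely often. From that moment on the paper runs a shadowing argument: using the normally hyperbolic invariant foliation (your eigenvalue computation is exactly the paper's Lemma on $JF(v)$ for $v\in\mathrm{int}(\Omega)$), one has a smooth $2$-Lipschitz ``projection'' $\pi:U\to\Omega$ along the stable leaves, and the pseudotrajectory estimate $d(x(t+h),\Phi_h(x(t)))\le e^{-t/4}$ for large $t$ yields $d(\pi(x(t+T)),\pi(x(t)))\le 2e^{-t/4}$ together with $x(t+T)\in U$. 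Iterating gives that $\pi(x(t+kT))$ is Cauchy in $J$ and $d(x(t+kT),\pi(x(t+kT)))\to 0$, hence $x(t)$ converges.

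Your proposed route --- first prove that the trajectory is eventually trapped away from $\partial\Omega$, then extract a polynomial rate $d(x(n),\Omega)=O(\gamma_n^\beta)$, then show the tangential coordinate $\theta=\sum_{A}x_i-\sum_{B}x_i$ converges as ``martingale plus absolutely summable drift'' --- would also work once the boundary lemma is in hand, and the $\theta$-argument is a pleasant alternative to the foliation/shadowing step. But the boundary lemma is genuinely extra content, not a formality: at $v_{\pm\infty}$ one computes $\partial L/\partial v_i=0$ for every $i$, so these endpoints are \emph{non-unstable} equilibria and are not ruled out by the standard ``non-convergence to unstable equilibria'' result you would want to invoke. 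The paper explicitly leaves the distribution of the limit on $\Omega$ open, so you would be proving more than the theorem asks while the paper proves exactly what is stated by sidestepping the boundary issue with the case split. If you keep your approach, be aware that your rate estimate and Lipschitz bound $|F_\theta(x)|\le C\,d(x,\Omega)$ both require being confined to a compact sub-segment of $\mathrm{int}(\Omega)$ \emph{before} they can be used, so the boundary lemma must come first and cannot itself rely on those estimates.
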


The question of the distribution of this random limit in $\Omega$ is left open.

The main technique used in \cite{benjamini2012generalized} is the dynamical system approach (see e.g. \cite{benaim1996dynamical,benaim1999dynamics}),
by which one can analyze the limiting behavior of
$x(n)$ via an approximating ordinary differential equation (ODE). Under some conditions on $x(n)$ and on the ODE,
it was shown that the limit set of $x(n)$ is contained in the \emph{equilibria set} of the ODE. Depending on $G$, the equilibria set can be
either finite or infinite. By a probabilistic argument, the authors in \cite{benjamini2012generalized} also proved that $x(n)$ has
probability zero to
converge to an unstable equilibrium (see Definition \ref{definition-unstable-equilibrium}).

Our results and proofs in this paper are continuation of those in \cite{benjamini2012generalized}.
To prove Theorem \ref{not_balanced_bipartite}, the main work is to prove the uniqueness of a non-unstable equilibrium for
any not balanced-bipartite $G$. The difficulty is that for a general graph, there is no explicit formula for the
equilibria and hence  it is impossible to calculate eigenvalues of the jacobian matrix at equilibria. We overcome
this difficulty by constructing a Lyapunov function. To prove
Theorem \ref{bipartite}, one main difficulty is that the limit set $\Omega$ attracts exponentially in the interior, but not
at its two endpoints. Thus one cannot directly apply the theorem proved in \cite{benaim1996asymptotic} for dealing with the case
where there is a uniform exponential attractor. Then our strategy to prove the convergence in Theorem \ref{bipartite} is to treat 
the convergence to the two endpoints of $\Omega$ and to its interior separately. More precisely, we will prove that the random process 
(\emph{interpolated process}) has to converge to some point in the
interior of $\Omega$ if it does not converge to the endpoints of $\Omega$. The proof uses ideas similar to shadowing techniques
\cite{benaim1996asymptotic,duflo1996algorithmes,schreiber1997expansion}. 
One main reason for our technique to work is due to the special structure of $\Omega$, which is a segment of equilibria that loses exponential attraction 
only at its two endpoints. Naturally, our technique can be applied to a setting where a segment of equilibria attracts exponentially everywhere but not
at finitely many points. 

The organization of this paper is as follows. In Section \ref{the_dynamical_approach}, we do some preparation work for the later proofs: we describe the dynamical system approach in our setting and cite the necessary results from \cite{benjamini2012generalized}. In Sections \ref{section_not_balanced_bipartite} and \ref{section_bipartite}, we prove Theorem \ref{not_balanced_bipartite} and \ref{bipartite} respectively. In Section \ref{balanced_not_regular}, we discuss the model on non-regular balanced-bipartite graphs.

\section{Some results from \cite{benjamini2012generalized}} \label{the_dynamical_approach}

We will first describe the evolution of the model in a way that highlights the underlying deterministic ODE.
Let $\mathcal F_n=\sigma(x(i):0\le i \le n)$ be the filtration generated by $x(i)$ up to step $n$. Then we have the following lemma, which was proved
in Sections 2 and 3 of \cite{benjamini2012generalized}.

\begin{lemma} \label{F_is_the_ODE}
The evolution of $\{x(n)\}_{n\ge 0}$ follows a recursive equation of the form
\begin{align}\label{definition system}
x(n+1)-x(n)=\gamma_n\left[F(x(n))+u(n)\right],
\end{align}
where $F:\R^m \to \R^m$ is a deterministic map, $u(n)$ is a random sequence of vectors with zero conditional mean ($\mathbb E(u(n)|\mathcal F_n)=0$) and $\gamma_n$
is a normalizing factor with $\gamma_n=O(1/n)$.
\end{lemma}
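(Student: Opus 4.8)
The plan is to derive \eqref{definition system} by an elementary rearrangement of the one-step recursion for the ball counts. Write $T_n=N_0+nN$, so that $T_{n+1}=T_n+N$ and $x_i(n)=B_i(n)/T_n$. For each edge $\{i,j\}\in E$ let $\xi_{ij}(n+1)\in\{0,1\}$ be the indicator that the ball added to the pair $\{i,j\}$ at step $n+1$ is placed in bin $i$, and put $\Delta_i(n+1)=\sum_{j:\{i,j\}\in E}\xi_{ij}(n+1)$, so that $B_i(n+1)=B_i(n)+\Delta_i(n+1)$. Since the transition at step $n+1$ depends only on $B(n)$ (equivalently on $x(n)$), equation \eqref{transition probability} gives $\mathbb E[\xi_{ij}(n+1)\mid\mathcal F_n]=x_i(n)/(x_i(n)+x_j(n))$, and hence $\mathbb E[\Delta_i(n+1)\mid\mathcal F_n]=\sum_{j:\{i,j\}\in E}x_i(n)/(x_i(n)+x_j(n))$. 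First I would compute, over the common denominator $T_nT_{n+1}$ and using $B_i(n+1)=B_i(n)+\Delta_i(n+1)$ together with $T_{n+1}=T_n+N$,
\[
x_i(n+1)-x_i(n)=\frac{T_nB_i(n+1)-T_{n+1}B_i(n)}{T_nT_{n+1}}=\frac{T_n\Delta_i(n+1)-NB_i(n)}{T_nT_{n+1}}=\frac{1}{T_{n+1}}\bigl(\Delta_i(n+1)-Nx_i(n)\bigr),
\]
which already has the shape of \eqref{definition system} once we set $\gamma_n:=1/T_{n+1}=1/(N_0+(n+1)N)$, a quantity with $n\gamma_n\to1/N$, so in particular $\gamma_n=O(1/n)$.

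Next I would split the bracket into its conditional mean and a mean-zero remainder. Define
\[
F_i(x):=\sum_{j:\{i,j\}\in E}\frac{x_i}{x_i+x_j}-Nx_i,\qquad u_i(n):=\Delta_i(n+1)-\sum_{j:\{i,j\}\in E}\frac{x_i(n)}{x_i(n)+x_j(n)},
\]
so that $\Delta_i(n+1)-Nx_i(n)=F_i(x(n))+u_i(n)$; the identity for $\mathbb E[\Delta_i(n+1)\mid\mathcal F_n]$ recorded above is exactly $\mathbb E[u(n)\mid\mathcal F_n]=0$, which establishes \eqref{definition system}. As a consistency check, summing over $i$ and using that each edge $\{i,j\}$ contributes $\tfrac{x_i}{x_i+x_j}+\tfrac{x_j}{x_i+x_j}=1$ gives $\sum_iF_i(x)=N-N\sum_ix_i=0$ on the simplex, matching the fact that $x(n)$ stays there.

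One small point deserves attention: $F$ as written is only defined where every coordinate is positive. Because $B_i(0)\ge1$ and balls are never removed, $B_i(n)\ge1$ for all $n$, so $x(n)$ never leaves the open simplex $\{x:\sum_ix_i=1,\ x_i>0\}$, on which $F$ is smooth; one then takes $F:\R^m\to\R^m$ to be any $C^\infty$ extension, which is immaterial for the dynamics of $x(n)$. I would also note for later use that $|u_i(n)|\le 2N$ since $0\le\Delta_i(n+1)\le N$, so the noise is uniformly bounded. The argument is entirely routine; the only thing requiring mild care is the denominator bookkeeping in the first display and the observation that the process stays away from the boundary of the simplex, so there is no genuine obstacle — the role of this lemma is just to install the stochastic-approximation framework used in the rest of the paper.
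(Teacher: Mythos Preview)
Your argument is correct and follows the same route as the paper's proof: write out $x_i(n+1)-x_i(n)$ in terms of the indicators $\xi_{ij}(n+1)$, then split into conditional mean plus martingale increment. The only difference is a harmless normalization---the paper absorbs a factor of $1/N$ into $F$ and $u$ (so their $\gamma_n=\dfrac{1}{N_0/N+(n+1)}$ and $F_i=-x_i+\dfrac{1}{N}\sum_{j\sim i}\dfrac{x_i}{x_i+x_j}$), whereas you keep $\gamma_n=1/T_{n+1}$ and let $F$ carry the factor $N$; the product $\gamma_n[F+u]$ is identical either way.
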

\begin{proof}

Recall that $x_i(n)$ is the fraction of the total number of balls contained in the $i$-th bin at time $n$:
\begin{align}
x_i(n)=\dfrac{B_i(n)}{N_0+nN}\,,\ \ \ i\in[m].
\end{align}
Let $\delta_{i\leftarrow j}(n+1)$ be the indicator of the event that the new ball added on the edge $\{i,j\}$ at step $n+1$ is added to the $i$-th bin. By the definition of the process, we have 
\begin{align}\label{transition probability 2}
\EE{\delta_{i\leftarrow j}(n+1)}{\mathcal F_n}
=\dfrac{x_i(n)}{x_i(n)+x_j(n)}\,\cdot
\end{align}
Now observe that
\begin{eqnarray*}
x_i(n+1)-x_i(n)&=&\dfrac{B_i(n)+\sum_{j\sim i}\delta_{i\leftarrow j}(n+1)}{N_0+(n+1)N}-\dfrac{B_i(n)}{N_0+nN}\\
               &&\\
               &=&\dfrac{-Nx_i(n)+\sum_{j\sim i}\delta_{i\leftarrow j}(n+1)}{N_0+(n+1)N}\\
               &=& \frac{1}{N_0/N+ (n+1)}\left(-x_i(n)+ \frac{1}{N}\sum_{j\sim i} \frac{x_i(n)}{x_i(n)+x_j(n)}   \right)  \nonumber \\
                & &  + \frac{1}{N_0/N+ (n+1)}\cdot \frac{1}{N}\sum_{j\sim i} \left(\delta_{i\leftarrow j}(n+1)-\frac{x_i(n)}{x_i(n)+x_j(n)} \right) \nonumber\cdot
\end{eqnarray*}
Let
\begin{align}\label{definition gamma_n}
\gamma_n=\dfrac{1}{\frac{N_0}{N}+(n+1)}\cdot
\end{align}
Thus, defining the sequence of random vectors $u(n)=(u_i(n))_{i\in [m]} \subset\R^m$ by
\begin{align}\label{definition u_n}
u_i(n)=\frac{1}{N}\sum_{j\sim i} \left(\delta_{i\leftarrow j}(n+1)-\frac{x_i(n)}{x_i(n)+x_j(n)} \right)
\end{align}
and $F=(F_1,\ldots,F_m)$ by
\begin{align}\label{definition F}
F_i(x_1,\ldots,x_m)=-x_i+\dfrac{1}{N}\sum_{j\sim i}\dfrac{x_i}{x_i+x_j}\,,
\end{align}
our random process takes the form
\begin{align}\label{definition system}
x(n+1)-x(n)=\gamma_n\left[F(x(n))+u(n)\right],
\end{align}
where $\mathbb{E}(u(n)|\mathcal F_n)=0$, which concludes the proof of Lemma \ref{F_is_the_ODE}.
\end{proof}

Following a limit set theorem (see e.g. \cite{benaim1996dynamical}), we can analyze the limiting behavior of $x(n)$
by considering its underlying ODE
$dv/dt=F(v), \, v\in \R^m$:
\begin{eqnarray}\label{definition ODE}
\left\{
\begin{array}{rcl}
\dfrac{dv_1(t)}{dt}&=&-v_1(t)+\dfrac{1}{N} \displaystyle\sum_{j\sim 1}\frac{v_1(t)}{v_1(t)+v_j(t)}    \\
&\vdots&\\
\dfrac{dv_m(t)}{dt}&=&-v_m(t)+\dfrac{1}{N} \displaystyle\sum_{j\sim m}\frac{v_m(t)}{v_m(t)+v_j(t)}\cdot
\end{array}\right.
\end{eqnarray}
Let us specify the domain of the vector field $F$. Fix $c<1/N$,
and let $\Delta$ be the set of $m$-tuples $(x_1,\ldots,x_m)\in\mathbb R^m$ such that:
\begin{enumerate}[(1)]
\item $x_i\ge 0$ and $\sum_{i=1}^m x_i=1$, and
\item $x_i+x_j\ge c$ for all $\{i,j\}\in E$.
\end{enumerate}
We equip $\Delta$ with the distance $d$ induced by the $L^1$ norm in $\R^m$. Note that $\Delta$ is positively invariant
(see Lemma 3.4 in \cite{benjamini2012generalized} for a detailed proof), and that the restriction of $F$ to $\Delta$
is  Lipschitz. A point $x\in \Delta$ is called an {\it equilibrium} if $F(x)=0$.
Let $\Lambda$ be the equilibria set of $F$ in $\Delta$.
The following result gives the relation between the limit set of $x(n)$ and $\Lambda$.

\begin{proposition}{\cite[Theorem 3.3]{benjamini2012generalized}} \label{limit_set_theorem}
The limit set of $\{x(n)\}_{n\ge 0}$ is a connected subset of $\Lambda$ almost surely.
\end{proposition}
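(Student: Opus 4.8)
This is the ``stochastic approximation implies dynamical system'' step, so the plan is to recognize the recursion of Lemma \ref{F_is_the_ODE} as a Robbins--Monro scheme on the compact set $\Delta$ and invoke the theory of asymptotic pseudotrajectories. The first task is to record that the hypotheses of that theory hold: the gains $\gamma_n=1/(N_0/N+n+1)$ satisfy $\gamma_n\to 0$, $\sum_n\gamma_n=\infty$ and $\sum_n\gamma_n^2<\infty$; the field $F$ is Lipschitz on $\Delta$, which is compact and positively invariant; and $u(n)$ is a martingale difference sequence ($\mathbb{E}(u(n)\mid\mathcal F_n)=0$) that is moreover \emph{bounded}, since each summand $\delta_{i\leftarrow j}(n+1)-x_i(n)/(x_i(n)+x_j(n))$ is an indicator minus a probability and hence lies in $[-1,1]$, so $\|u(n)\|\le 1$ for all $n$. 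One also notes, as in Lemma~3.4 of \cite{benjamini2012generalized}, that $x(n)\in\Delta$ for all $n$ large enough, so the process really lives on the domain of $F$.

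\textbf{Step 1 (the limit set is internally chain transitive).} From $\sum_n\gamma_n^2<\infty$ and the boundedness of $u(n)$, the martingale $\sum_{k<n}\gamma_k u(k)$ converges almost surely; together with the other hypotheses above this is exactly what is needed to conclude (see \cite{benaim1996dynamical,benaim1999dynamics}) that the piecewise-linear interpolated process $X(\cdot)$ is almost surely an asymptotic pseudotrajectory of the semiflow $\Phi$ generated on $\Delta$ by $dv/dt=F(v)$. By Bena\"im's limit set theorem, its limit set $L$ is then almost surely a non-empty, compact, \emph{connected}, internally chain transitive, hence $\Phi$-invariant subset of $\Delta$; and $L$ coincides with the limit set of the sequence $\{x(n)\}$ because $x(n)=X(\tau_n)$ with $\tau_n=\sum_{k<n}\gamma_k\uparrow\infty$. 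This already yields the connectedness asserted in the proposition, and it remains only to improve ``internally chain transitive subset of $\Delta$'' to ``subset of $\Lambda$''.

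\textbf{Step 2 (a strict Lyapunov function) and conclusion.} The plan here is to produce the function
\[
V(x)=-\frac1N\sum_{\{i,j\}\in E}\log(x_i+x_j),
\]
which is smooth on $\Delta$ because $x_i+x_j\ge c>0$ on every edge. Writing $F_i(x)=x_i(g_i(x)-1)$ with $g_i(x)=\frac1N\sum_{j\sim i}(x_i+x_j)^{-1}=-\partial_iV(x)$, and using the identity $\sum_i x_i g_i(x)=\frac1N\sum_{\{i,j\}\in E}\big(\tfrac{x_i}{x_i+x_j}+\tfrac{x_j}{x_i+x_j}\big)=1$, one computes along a trajectory $v(t)$ in $\Delta$ that $\frac{d}{dt}V(v(t))=-\big(\sum_i v_i g_i^2-(\sum_i v_i g_i)^2\big)\le 0$, the inequality being Cauchy--Schwarz for the probability weights $v_i$, with equality if and only if $g_i$ is constant over $\{i:v_i>0\}$ --- a constant forced to equal $1$ --- that is, if and only if $F(v)=0$. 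Thus $V$ is a strict Lyapunov function for $(\Phi,\Lambda)$. A short computation on tangent vectors $\xi$ to $\Lambda$ (which satisfy $\xi_i=0$ off the support and $\sum_i\xi_i=0$, while $\partial_iV=-g_i=-1$ on the support) shows $\langle\nabla V,\xi\rangle=0$, so $V$ is constant on each connected component of $\Lambda$; as $\Lambda$ is a compact semi-algebraic set it has finitely many components, hence $V(\Lambda)$ is finite and in particular has empty interior. Feeding this into the standard Lyapunov criterion for internally chain transitive sets (\cite[Prop.~6.4]{benaim1999dynamics}) gives $L\subseteq\Lambda$ almost surely, which together with Step~1 proves the proposition.

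\textbf{Main obstacle.} The steps that are not purely mechanical are the guessing of the explicit Lyapunov function $V$ and the verification that the equality case of the Cauchy--Schwarz estimate coincides exactly with the equilibria set $\Lambda$ (plus the minor point that $V(\Lambda)$ has empty interior); on the probabilistic side, the small amount of work is in checking the boundedness of the noise and the eventual confinement of $x(n)$ to $\Delta$, which is what licenses the asymptotic-pseudotrajectory theorem. Everything else is a direct appeal to the dynamical-systems literature, which is why \cite{benjamini2012generalized} can state this as a single theorem.
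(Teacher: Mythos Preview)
Your proof is correct and follows the same route as the paper's sketch: verify the gain/noise hypotheses so that Bena\"im's limit set theorem applies, and then use a strict Lyapunov function to force the (connected, internally chain transitive) limit set into $\Lambda$. Your $V$ agrees on $\Delta$ with the paper's $L(v)=-\sum_i v_i+\tfrac1N\sum_{\{i,j\}\in E}\log(v_i+v_j)$ up to sign and an additive constant; the paper's form has the small structural advantage that $F_i=v_i\,\partial_i L$, so $\tfrac{d}{dt}L(v(t))=\sum_i v_i(\partial_i L)^2$ is a sum of squares directly, bypassing your Cauchy--Schwarz step.
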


For the sake of completeness, we sketch the proof of this proposition. It requires the construction of a
Lyapunov function. Let $U\subset \R^m$ be a closed set and $F: U\to \R^m$
be a continuous vector field with unique integral curves.

\begin{definition}[Lyapunov function]\label{definition strict lyapunov function}
A {\it (strict) Lyapunov function for $W\subset U$} is a continuous map $L:U\to\R$ which is (strictly)
monotone along any integral curve of $F$ in $U \setminus W$.
\end{definition}

\begin{proofof}{Proposition \ref{limit_set_theorem}}
We refer the reader to Section 3 of \cite{benjamini2012generalized} for a detailed proof.
We will use the limit set theorem stated therein, which requires
the following conditions:
\begin{enumerate}[(i)]
\item for any $T>0$,
\begin{align*}
\lim_{n\to\infty}\left(\sup_{\{k:0\le\tau_k-\tau_n\le T\}}\left\Vert\sum_{i=n}^{k-1}\gamma_i u(i)\right\Vert\right)=0 \, \text{ a.s.}
\end{align*}
where $\tau_n=\sum_{i=0}^{n-1}\gamma_i$, and
\item $F$ admits a strict Lyapunov function $L$ for $\Lambda$.
\end{enumerate}
We remark that (i) controls the noise perturbation between the random process $x(n)$ and its associated ODE, and
(ii) guarantees the convergence of the ODE to its equilibria.

For (i), let $M_n=\sum_{i=0}^n\gamma_i u(i)$. $\{M_n\}_{n\ge 0}$ is a martingale with bounded quadratic variation, hence it
converges almost surely to a finite random vector
(see e.g. Theorem 5.4.9 of \cite{durrett2010probability}). In particular,
it is a Cauchy sequence and so (i) holds almost surely.

For (ii), let $L:\Delta\rightarrow\R$ be given by
\begin{align}\label{definition L}
L(v_1,\ldots,v_m)=-\sum_{i=1}^m v_i+\dfrac{1}{ N}\sum_{\{i,j\}\in E}\log{(v_i+v_j)}.
\end{align}
Thus
\begin{align}\label{expression of velocity}
\dfrac{dv_i}{dt}=v_i\left(-1+\dfrac{1}{N}\sum_{i\sim j}\dfrac{1}{v_i+v_j}\right)
=v_i\dfrac{\partial L}{\partial v_i}\cdot
\end{align}
If $v=(v_1(t),\ldots,v_m(t))$, $t\ge 0$, is an integral curve of $F$, then (\ref{expression of velocity}) implies
\begin{eqnarray*}
\dfrac{d}{dt}(L\circ v)=\sum_{i=1}^m\dfrac{\partial L}{\partial v_i}\dfrac{dv_i}{dt}
=\sum_{i=1}^m v_i\left(\dfrac{\partial L}{\partial v_i}\right)^2\ge 0.
\end{eqnarray*}
In particular, the last expression is zero if and only if $v_i\left(\frac{\partial L}{\partial v_i}\right)^2=0$ for all
$i\in[m]$, which is equivalent to $F(v)=0$. Hence, $L$ is a strict Lyapunov function for $\Lambda$.

The rest of the proof is a straightforward application of the limit set theorem.
\end{proofof}

Define a face $\Delta_S$ of $\Delta$ as its subset such that $v_i=0$ if and only if $i\notin S\subset [m]$. Let $L|_{\Delta_S}$
be the restriction of $L$ to $\Delta_S$. Since an equilibrium $v$
satisfies  $v_i(\partial L/\partial v_i)=0$ for any $i\in [m]$, we can decompose the equilibria set $\Lambda$ into the union
of the sets of critical points of $L|_{\Delta_S}$ over all faces $\Delta_S$.

When $G$ is not balanced-bipartite, $L$ is strictly concave (see Corollary 1.3 in \cite{benjamini2012generalized}). So for any face $\Delta_S$, $L|_{\Delta_S}$
is strictly concave, and hence has at most one critical point. Therefore, $\Lambda$ is finite. Then it immediately follows
from Proposition \ref{limit_set_theorem} that the limit of $x(n)$ exists in this case. We have the corollary below.

\begin{corollary}{\cite[Corollary 1.3]{benjamini2012generalized}}  \label{finite_limit_points}
Let $G$ be a finite, connected, not balanced-bipartite graph.
Then $\Lambda$ is finite and $x(n)$ converges to an element of $\Lambda$ almost surely.
\end{corollary}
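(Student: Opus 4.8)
The plan is to prove the two assertions separately: that the equilibria set $\Lambda$ is finite, and that finiteness forces $x(n)$ to converge. The implication ``$\Lambda$ finite $\Rightarrow$ $x(n)$ converges'' is soft, so I would dispose of it first. The process $\{x(n)\}$ lies in the compact simplex $\{x\in\R^m:\sum_i x_i=1,\ x_i\ge 0\}$, hence its limit set is nonempty; by Proposition \ref{limit_set_theorem} this limit set is almost surely a connected subset of $\Lambda$; and a nonempty connected subset of a finite subset of $\R^m$ is a single point. Thus, granting finiteness of $\Lambda$, the process converges almost surely to a point of $\Lambda$ (a priori random).

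It remains to show $\Lambda$ is finite, and this is the only place the hypothesis enters, through the strict concavity of the Lyapunov function $L$ of \eqref{definition L} --- precisely Corollary~1.3 of \cite{benjamini2012generalized}. I would record that fact as follows. The linear part of $L$ does not affect second derivatives, and a direct computation gives
\[
\operatorname{Hess}L(v)\;=\;-\frac1N\sum_{\{i,j\}\in E}\frac{1}{(v_i+v_j)^2}\,(e_i+e_j)(e_i+e_j)^{\top},
\]
with $e_i$ the $i$-th standard basis vector; since $v_i+v_j\ge c>0$ on $\Delta$, the function $L$ is smooth there and $\operatorname{Hess}L$ is negative semidefinite. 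On the tangent hyperplane $H_0=\{\xi\in\R^m:\sum_i\xi_i=0\}$ of the simplex it is negative definite unless some nonzero $\xi\in H_0$ satisfies $\xi_i+\xi_j=0$ for every edge $\{i,j\}$. Because $G$ is connected, these edge relations together with $\xi\neq 0$ force $G$ to be bipartite with classes $A,B$ and $\xi$ to equal some constant $t\neq 0$ on $A$ and $-t$ on $B$; but then, since $\xi\in H_0$, $0=\sum_i\xi_i=t(\#A-\#B)$, which requires $\#A=\#B$. As $G$ is not balanced-bipartite, no such $\xi$ exists, so $\operatorname{Hess}L(v)$ is negative definite on $H_0$ for every $v\in\Delta$; equivalently, $L$ is strictly concave along $\Delta$.

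Finally I would use strict concavity to bound $|\Lambda|$ by the number of possible supports. Suppose $v,v'\in\Lambda$ share the support $S=\{i:v_i>0\}=\{i:v_i'>0\}$ and that $v\neq v'$. Both lie in the convex set $\Delta$, so the segment $[v,v']$ stays in $\Delta$ and $g(t):=L\big((1-t)v+tv'\big)$ is $C^2$ with $g''<0$ on $[0,1]$, since $v'-v\in H_0\setminus\{0\}$ and $\operatorname{Hess}L$ is negative definite on $H_0$. At an equilibrium $F_i=v_i(\partial L/\partial v_i)=0$, so $\partial L/\partial v_i=0$ for every $i\in S$; since $v'-v$ vanishes off $S$, this gives $g'(0)=\langle\nabla L(v),v'-v\rangle=0$, and symmetrically $g'(1)=0$. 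But a function with $g''<0$ on $[0,1]$ has strictly decreasing derivative and cannot vanish at both $0$ and $1$ --- a contradiction. Hence each of the $2^m$ possible supports carries at most one equilibrium, so $\Lambda$ is finite, and with the first paragraph the corollary follows. The only step I regard as non-routine is the negative-definiteness computation, i.e.\ matching the rank of $\operatorname{Hess}L$ on $H_0$ with the dichotomy ``balanced-bipartite or not''; the compactness argument, the connected-subset-of-a-finite-set observation, and the one-variable convexity argument are all elementary.
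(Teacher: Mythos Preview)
Your proof is correct and follows essentially the same route as the paper: strict concavity of $L$ on the simplex when $G$ is not balanced-bipartite (which the paper cites from \cite{benjamini2012generalized} while you supply the Hessian computation), uniqueness of an equilibrium on each face/support (the paper phrases this as ``$L|_{\Delta_S}$ strictly concave $\Rightarrow$ at most one critical point'', you unpack it via the one-variable function $g$), and then the connected-subset-of-a-finite-set observation from Proposition~\ref{limit_set_theorem}. The only difference is the level of detail, not the argument.
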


After proving that the limit set of $x(n)$ is contained in $\Lambda$ in Proposition \ref{limit_set_theorem}, we want to understand
which equilibrium $x(n)$ can actually converge to. First we give the following definition.

\begin{definition}[Unstable/non-unstable equilibrium]\label{definition-unstable-equilibrium}
An equilibrium $x$ is called {\it unstable} if
at least one of the eigenvalues of $JF(x)$, the jacobian matrix of $F$ at $x$, has positive real part. Otherwise, we 
call it {\it non-unstable}.
\end{definition}

The following lemma rules out the possibility that $x(n)$ converges to an unstable equilibrium.

\begin{lemma}{}\label{zero probability unstable}
Let $G$ be a finite and connected graph. Let
$v$ be an unstable equilibrium. Then
\begin{align}\label{equation non-convergence unstable}
\P{\lim_{n\to \infty}x(n)=v}=0.
\end{align}
\end{lemma}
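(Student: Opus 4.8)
The plan is to invoke the standard non-convergence theory for stochastic approximation processes toward unstable (linearly repelling) equilibria, in the spirit of Pemantle and of Bena\"{i}m--Hirsch. The process $x(n)$ satisfies, by Lemma \ref{F_is_the_ODE}, the recursion $x(n+1)-x(n)=\gamma_n[F(x(n))+u(n)]$ with $\gamma_n=\Theta(1/n)$, $\mathbb E(u(n)|\mathcal F_n)=0$, and $u(n)$ uniformly bounded (each $u_i(n)$ is a bounded combination of indicators and probabilities, so $\sup_n\|u(n)\|<\infty$ almost surely and in $L^\infty$). The vector field $F$ is $C^1$ (indeed real-analytic) on a neighborhood of $\Delta$, and at the unstable equilibrium $v$ the Jacobian $JF(v)$ has, by Definition \ref{definition-unstable-equilibrium}, an eigenvalue with strictly positive real part. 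These are exactly the hypotheses under which a general non-convergence theorem applies.

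The key steps, in order, are as follows. First I would recall the relevant abstract statement: for a Robbins--Monro scheme with step sizes $\gamma_n$ satisfying $\sum\gamma_n=\infty$, $\sum\gamma_n^2<\infty$, with bounded martingale-difference noise whose conditional covariance is, roughly speaking, nondegenerate in the expanding direction, and with a $C^1$ vector field $F$ having a linearly unstable zero $v$, one has $\mathbb P[x(n)\to v]=0$. (This is the content of the non-convergence results of Pemantle \cite{pemantle2007survey} and of Bena\"{i}m, as used in \cite{benjamini2012generalized} itself; in our setting $\sum\gamma_n^2<\infty$ since $\gamma_n=O(1/n)$.) Second, I would verify the noise nondegeneracy: project $u(n)$ onto an eigenvector (or a vector in the generalized eigenspace) $w$ associated with an eigenvalue of $JF(v)$ with positive real part, and check that $\mathbb E[(\langle w,u(n)\rangle)^2\mid\mathcal F_n]$ is bounded below by a positive constant whenever $x(n)$ lies in a small neighborhood of $v$. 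This uses the explicit form \eqref{definition u_n}: the coordinates of $u(n)$ are sums of centered, conditionally independent Bernoulli-type random variables $\delta_{i\leftarrow j}(n+1)-x_i/(x_i+x_j)$ over the edges, each with conditional variance of order $1$ near $v$ (since constraint (2) defining $\Delta$ keeps $x_i+x_j\ge c>0$, the success probabilities stay away from $0$ and $1$), so the projected noise has conditional variance bounded below uniformly on a neighborhood of $v$ — as long as $w$ is not orthogonal to the span of the relevant edge-increment vectors, which one checks directly, or circumvents by noting that if the projection were degenerate then the dynamics would be confined to an invariant affine subspace on which $v$ would still be unstable. Third, with both the linear instability of $F$ at $v$ and the uniform noise ellipticity in the unstable direction in hand, I would apply the abstract non-convergence theorem to conclude \eqref{equation non-convergence unstable}.

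The main obstacle I expect is the noise nondegeneracy verification in the second step: one must make sure that the fluctuation $u(n)$ genuinely pushes $x(n)$ away from $v$ along the unstable eigendirection, rather than being confined to a subspace transverse to it. The cleanest route is to observe that the increment vectors $e_i-e_j$ ranging over all edges $\{i,j\}\in E$ span (since $G$ is connected) the whole hyperplane $\{\sum_i y_i=0\}$, which contains the unstable eigenvector $w$ (as $F$ preserves the affine simplex, $JF(v)$ maps this hyperplane to itself and its unstable eigenvector lies in it); hence the conditional variance of $\langle w,u(n)\rangle$ cannot vanish near $v$, giving the required lower bound. The remainder is a routine citation of the stochastic-approximation non-convergence machinery, exactly as in \cite{benjamini2012generalized}, so I would keep that part brief.
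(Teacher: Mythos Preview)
Your strategy—invoke a Pemantle/Bena\"im non-convergence theorem after verifying noise nondegeneracy in the unstable eigendirection—is natural, but the nondegeneracy verification contains a genuine gap. You claim that constraint (2) in the definition of $\Delta$ keeps the success probabilities $x_i/(x_i+x_j)$ away from $0$ and $1$. It does not: that constraint only bounds the \emph{denominator} $x_i+x_j$ away from zero, not the numerator. In fact, by Lemma~\ref{equilibrium_v_Lyapunov} every unstable equilibrium $v$ lies on the boundary of $\Delta$: there is an index $i$ with $v_i=0$ and $\partial L/\partial v_i>0$. Near such a $v$, every neighbour $j\sim i$ has $v_j\ge c>0$, so $x_i/(x_i+x_j)\to 0$ as $x\to v$, and the Bernoulli variances $p_{ij}(1-p_{ij})$ on all edges incident to $i$ tend to zero. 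From the block-triangular form \eqref{definition jacobian} the \emph{left} eigenvector for the positive eigenvalue $\partial L/\partial v_i$ is exactly $e_i$ (since $B$ has only nonpositive eigenvalues), so the projected noise one needs is $\langle e_i,u(n)\rangle=u_i(n)$, whose conditional variance is $\tfrac{1}{N^2}\sum_{j\sim i}p_{ij}(1-p_{ij})\to 0$. The uniform lower bound on the noise in the unstable direction therefore fails precisely at the equilibria you need to rule out, and your spanning argument for $\{e_k-e_l\}_{\{k,l\}\in E}$ does not rescue this: spanning says nothing about the \emph{weights} $p_{kl}(1-p_{kl})$ in the variance formula.

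The paper proceeds differently. It first establishes Lemma~\ref{equilibrium_v_Lyapunov}—an unstable equilibrium is exactly one with $v_i=0$ and $\partial L/\partial v_i>0$ for some $i$—and then appeals to Lemma~5.2 of \cite{benjamini2012generalized}, a model-specific probabilistic argument tailored to this boundary configuration (where the drift of $x_i$ near $v$ is $x_i\cdot\partial L/\partial v_i$ with a positive coefficient, so one argues directly that $x_i(n)\not\to 0$). If you wish to keep the abstract stochastic-approximation route, you would need a non-convergence result that tolerates noise degenerating proportionally to $x_i$ at the boundary; such results exist, but they require a different verification than the one you sketched.
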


The proof of Lemma \ref{zero probability unstable} follows from Lemma 5.2 in \cite{benjamini2012generalized} and the characterization of an unstable equilibrium as shown in the following lemma.

\begin{lemma} \label{equilibrium_v_Lyapunov}
An equilibrium $v$ is unstable if and only if there exists some coordinate $i\in [m]$ with
$v_i=0$ and  $\partial L/\partial v_i>0$.
\end{lemma}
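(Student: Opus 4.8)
The plan is to compute the spectrum of the Jacobian $JF(v)$ explicitly, exploiting the factorised form of $F$ together with the concavity of $L$. First I would use (\ref{expression of velocity}) to write $F_i(x)=x_i\,g_i(x)$ with $g_i:=\partial L/\partial x_i$; near any point of $\Delta$ these functions are smooth, since $v\in\Delta$ forces $v_i+v_j\ge c>0$ on each edge, so $JF(v)$ is well defined. Let $S=\{i:v_i>0\}$ be the support of the equilibrium $v$. From $F_i(v)=v_i\,g_i(v)=0$ one gets, for each $i$, the dichotomy: either $i\in S$ and $g_i(v)=0$, or $i\notin S$ and $v_i=0$.

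Next I would differentiate $F_i=x_i g_i$, obtaining $\partial F_i/\partial x_k(v)=\delta_{ik}\,g_i(v)+v_i\,H_{ik}(v)$, where $H(v)=\big(\partial^2 L/\partial x_i\partial x_k\,(v)\big)_{i,k}$ is the symmetric Hessian of $L$ at $v$. For $i\notin S$ the term $v_i H_{ik}(v)$ vanishes, leaving $g_i(v)$ only on the diagonal; for $i\in S$ the term $\delta_{ik}g_i(v)$ vanishes, leaving $v_i$ times the $i$-th row of $H(v)$. Ordering the coordinates so that those in $S$ come first, $JF(v)$ is therefore block upper triangular:
\[
JF(v)=\begin{pmatrix} A & B\\ 0 & D\end{pmatrix},\qquad
A=\mathrm{diag}(v_i)_{i\in S}\cdot H_S(v),\qquad
D=\mathrm{diag}\!\big(\partial L/\partial v_i\big)_{i\notin S},
\]
where $H_S(v)$ is the principal submatrix of $H(v)$ indexed by $S$. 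Consequently the eigenvalues of the full $m\times m$ matrix $JF(v)$ — the object appearing in Definition \ref{definition-unstable-equilibrium} — are the eigenvalues of $A$ together with the real numbers $\partial L/\partial v_i$, $i\notin S$.

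It then remains to check that $A$ contributes no eigenvalue of positive real part; the lemma follows at once. Here I would use that $L$ is concave: each summand $\frac{1}{N}\log(v_i+v_j)$ is concave, hence so is $L$, and its Hessian at $v$ is $H(v)=-\frac{1}{N}\sum_{\{a,b\}\in E}(v_a+v_b)^{-2}(e_a+e_b)(e_a+e_b)^{\top}\preceq 0$, a sum of negative semidefinite rank-one matrices. Hence the principal submatrix $H_S(v)$ is negative semidefinite, and since $P:=\mathrm{diag}(v_i)_{i\in S}$ is positive definite, $A=P\,H_S(v)$ is similar to the symmetric, negative semidefinite matrix $P^{1/2}H_S(v)P^{1/2}$, so $A$ has only real, non-positive eigenvalues. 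Therefore $JF(v)$ has an eigenvalue of positive real part if and only if $\partial L/\partial v_i>0$ for some coordinate $i$ with $v_i=0$, which is exactly the claim.

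The two facts doing the work are the block-triangular structure of $JF(v)$ relative to the support $S$ and the concavity of $L$ (which is what prevents the support block $A$ from producing an unstable direction); the remaining steps — the differentiation and the bookkeeping of which entries survive at the equilibrium — are routine. The only minor technical points are the smoothness of $F$ at $v$, ensured by the defining inequality $v_i+v_j\ge c$ of $\Delta$, and the trivial remark that the extra eigenvalues $\partial L/\partial v_i$, $i\notin S$, are automatically real so that "positive real part" just means positive there.
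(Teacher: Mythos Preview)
Your proof is correct and follows essentially the same approach as the paper: both compute the Jacobian via $F_i=v_i\,\partial L/\partial v_i$, obtain a block-triangular structure separating the zero and positive coordinates, and use the concavity of $L$ to show the support block has only real nonpositive eigenvalues. Your similarity argument $A=PH_S\sim P^{1/2}H_SP^{1/2}$ is just the matrix translation of the paper's weighted inner-product argument, and your ordering (support first) versus the paper's (zeros first) merely swaps upper and lower block-triangular form.
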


Lemma \ref{equilibrium_v_Lyapunov} was proved in Section 5 of \cite{benjamini2012generalized}. For the sake of completeness, we give its proof here.

\begin{proof}
We look at the
jacobian matrix $JF(v)$:
$$
\frac{\partial F_i}{\partial v_j}=\left\{\begin{array}{ll}
v_i\dfrac{\partial ^2L}{\partial v_i\partial v_j}&\text{ if }i\sim j,\\
&\\
\dfrac{\partial L}{\partial v_i}+v_i\dfrac{\partial ^2L}{\partial v_i^2} &\text{if }i= j, \\
&\\
0&\text{otherwise}.\\
\end{array}\right.
$$
Without loss of generality, assume that $v_i=0$ iff $1\le i\le k$ ($k$ can be zero). Thus
\begin{align}\label{definition jacobian}
JF(v)=\left[
\begin{array}{cc}
A & 0\\
C & B \\
\end{array}
\right]
\end{align}
where $A$ is a $k\times k$ diagonal matrix with $a_{ii}=\partial L/\partial v_i$, $i\in[k]$.
The spectrum of $JF(v)$ is the union of the spectra of $A$ and $B$. With respect to the inner
product $(x,y)=\sum_{i=k+1}^m x_iy_i/v_i$, $B$ is self-adjoint and negative semidefinite
(by the concavity of $L$), hence the eigenvalues of $B$ are real and nonpositive.
Therefore, $JF(v)$ has at least one real positive eigenvalue if and only if at least one of the $a_{ii}$ is positive.
\end{proof}

Let $w=(w_1,\ldots,w_m)$ be a non-unstable equilibrium.
Let $P=\{i\in [m]: w_i>0\}$ and $Z=\{i\in [m]: w_i=0 \}=[m]\setminus P$ denote the coordinates of $w$ with strictly positive and
zero values respectively. Notice that $Z$ can be empty. By the definition of equilibrium and (\ref{expression of velocity}),
if $i \in P$ then $\left. \partial L/\partial v_i \right|_{w}=0$. By Lemma  \ref{equilibrium_v_Lyapunov}, if $i \in Z$ then $\left. \partial L/\partial v_i \right|_{w}\le 0$. Hence, $w$ is non-unstable if and only if it satisfies
\begin{equation} \label{zero_value_set}
\left. \frac{\partial L}{\partial v_i} \right|_{w}\le 0, \,  \forall i\in Z; \, \left. \frac{\partial L}{\partial v_i} \right|_{w}=0, \,  \forall i\in P.
\end{equation}
It can also be seen from these conditions that only boundary equilibria can be unstable.

\section{Not balanced-bipartite graphs: Proof of Theorem \ref{not_balanced_bipartite}} \label{section_not_balanced_bipartite}

By Corollary \ref{finite_limit_points}, if $G$ is not balanced-bipartite,
then the limit of $x(n)$ exists almost surely and is contained in $\Lambda$, i.e.
\begin{equation}  \label{total probability 1}
\sum_{v\in \Lambda} \mathbb P \left[\lim_{n\to \infty} x(n)=v \right] =1.
\end{equation}
By Lemma \ref{zero probability unstable}, the probability that $x(n)$ converges to an unstable equilibrium of $F$ is zero.
Then there exists at least one non-unstable equilibrium.
To prove Theorem \ref{not_balanced_bipartite}, it suffices to prove its uniqueness, which is given
by the following lemma.
\begin{lemma}  \label{unique_stable_equilibrium}
Let $G$ be a finite, connected, not balanced-bipartite graph. Then all but exactly one equilibrium are unstable.
\end{lemma}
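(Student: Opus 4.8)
The plan is to show that $L$ has a \emph{unique} critical point among all faces of $\Delta$ once we restrict attention to the non-unstable ones, using the strict concavity of $L$ (valid for not balanced-bipartite $G$) together with the sign conditions \eqref{zero_value_set}. By the discussion preceding the lemma, a non-unstable equilibrium $w$ with positive set $P$ and zero set $Z$ is exactly a point of $\Delta$ satisfying $\partial L/\partial v_i|_w = 0$ for $i \in P$ and $\partial L/\partial v_i|_w \le 0$ for $i \in Z$; equivalently, $w$ is a maximizer of $L$ over the simplex slice $\{v \in \Delta : v_i = 0 \text{ for } i \in Z\}$ (a KKT-type characterization, using that $L$ is concave so first-order conditions are sufficient). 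So the task is to rule out the existence of two distinct such maximizers, possibly living on different faces.

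First I would handle the case of two non-unstable equilibria $w$ and $w'$ supported on the \emph{same} face $\Delta_S$: since $L|_{\Delta_S}$ is strictly concave, it has at most one critical point, so $w = w'$. The substantive case is when $w$ and $w'$ have different supports $P \ne P'$. Here I would use a convexity/interpolation argument: consider the segment $w_t = (1-t)w + t w'$ for $t \in [0,1]$, which stays in $\Delta$ by convexity. By strict concavity of $L$, the function $t \mapsto L(w_t)$ is strictly concave, so it has a unique maximum on $[0,1]$. The idea is to show that the directional derivative of $L$ at $w$ in the direction $w' - w$ is $\le 0$ and the directional derivative at $w'$ in the direction $w - w'$ is also $\le 0$ — each of these should follow from the sign conditions \eqref{zero_value_set} by splitting the sum over coordinates into $P \cap P'$, $P \setminus P'$, $P' \setminus P$, and using that on $P$ the partials vanish while on the $Z$-sets they are nonpositive (and the corresponding coordinate increments $w'_i - w_i$ have a definite sign when $i$ lies in exactly one of the supports). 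But a strictly concave function cannot have nonpositive derivative at \emph{both} endpoints of a segment unless the endpoints coincide; this contradiction forces $w = w'$.

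The main obstacle I anticipate is making the directional-derivative sign computation genuinely rigorous when coordinates vanish: $\partial L/\partial v_i = -1 + \frac{1}{N}\sum_{j\sim i}\frac{1}{v_i+v_j}$ is finite at an equilibrium of $\Delta$ because condition (2) defining $\Delta$ forces $v_i + v_j \ge c > 0$ on every edge, so there is no blow-up — but one must check the relevant one-sided derivatives exist and that the interchange of differentiation and summation is legitimate along the segment $w_t$, which it is since $L$ is $C^1$ on $\Delta$. A secondary subtlety is confirming that strict concavity of $L$ on all of $\Delta$ (not merely on individual faces) is what Corollary 1.3 of \cite{benjamini2012generalized} provides, and that $\Delta$ — or at least the relevant slices — is convex, which is immediate from its defining inequalities. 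Once the sign bookkeeping in the four-way split is pinned down, uniqueness of the non-unstable equilibrium, and hence Lemma \ref{unique_stable_equilibrium}, follows; combined with \eqref{total probability 1} and Lemma \ref{zero probability unstable} this yields Theorem \ref{not_balanced_bipartite}.
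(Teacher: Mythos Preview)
Your argument is correct and is in fact cleaner than the paper's. The key observation you make---that the conditions \eqref{zero_value_set} are precisely the first-order (KKT) conditions for $w$ to maximize $L$ over $\Delta$, so that by strict concavity of $L$ any two non-unstable equilibria must coincide---goes through exactly as you outline: the directional derivative $\sum_i (w'_i-w_i)\,\partial L/\partial v_i|_w$ has vanishing contributions from $i\in P$ and nonpositive contributions from $i\in Z$ (since $w'_i\ge 0=w_i$ there), and symmetrically at $w'$; strict concavity of $t\mapsto L((1-t)w+tw')$ then forces $w=w'$. Your worry about blow-up at the boundary is unfounded for the reason you state, and $\Delta$ is convex since it is cut out by linear inequalities.

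The paper takes a different, more dynamical route: given a non-unstable equilibrium $w$ with positive support $P$, it introduces a second Lyapunov function $H(v)=\sum_{i\in P}w_i\log v_i$ and computes
\[
\frac{dH(v(t))}{dt}=-1+\frac{1}{N}\sum_{\{i,j\}\in E}\frac{w_i+w_j}{v_i+v_j},
\]
which is shown (via a separate lemma proving $\sum_{\{i,j\}\in E}(w_i+w_j)/(v_i+v_j)\ge N$ with equality only at $w$) to be nonnegative and to vanish only at $w$. This establishes that every orbit starting in ${\rm int}(\Delta)$ converges to $w$, hence $w$ is unique. Your approach bypasses the auxiliary inequality lemma entirely and is the standard uniqueness argument for constrained maximizers of a strictly concave function; the paper's approach, while longer, yields the additional dynamical statement that $w$ is a global attractor for the interior flow.
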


\begin{proof}
The proof uses properties of the vector field $F$ only (the random sequence $\{x(n)\}_{n\geq 0}$ plays no role).
Let $w=(w_1,\ldots,w_m)$ be a non-unstable equilibrium.
We claim that for any $v^0 \in{\rm int}(\Delta)$, the orbit $\{v(t) \}_{t\ge 0}$ with $v(0)=v^0$ converges to $w$. Clearly,
this implies the uniqueness of $w$.

Recall the definition of $P$ in the end of the previous section. We prove the claim by constructing a Lyapunov function $H: \{v\in \Delta: v_i>0, \, \forall i\in P\}\to \R$,
\begin{equation}  \label{Lyapunov_function_uniqueness}
H(v)=\sum_{i\in P} w_i \log{v_i}.
\end{equation}
It is worth noting that ${\rm int}(\Delta)\subset \{v\in \Delta: v_i>0, \, \forall i\in P\}$ and that $H(v)\le 0$ in $\Delta$.  Set $c^0=\sum_{i\in P} w_i \log{v_i(0)}$, and
consider $H^{-1}[c^0, 0]=\{v\in \Delta: H(v)\ge c^0 \}$.
Observe that  there exists some small $\widetilde c>0$ such that
\[
H^{-1}[c^0, 0] \subset \{u\in \Delta: u_i\ge \widetilde c, i\in P  \}.
\]
Hence if the orbit $v(t)$ is located in the set $H^{-1}[c^0, 0]$, it is legitimate to take the derivative of $H(v)$ along it:
\begin{eqnarray*}
\dfrac{d H(v(t))}{d t}&=&\dfrac{d}{dt}\left(\displaystyle \sum_{i\in P} w_i \log{v_i} \right)  \nonumber \\
                              &=& \sum_{i\in P} w_i \dfrac{1}{v_i}\left(-v_i+\dfrac{1}{N} \sum_{j\sim i} \frac{v_i}{v_i+v_j}\right)  \nonumber \\
                              &=& \sum_{i\in P} w_i \left(-1+\dfrac{1}{N} \sum_{j\sim i} \frac{1}{v_i+v_j}\right)\cdot
\end{eqnarray*}
Since $w_i=0$ for $i\in Z$, it follows from above that
\begin{eqnarray} \label{general, lyapunov derivative}
\dfrac{d H(v(t))}{d t}&=& \sum_{i=1}^m w_i \left(-1+\dfrac{1}{N} \sum_{j\sim i} \frac{1}{v_i+v_j}\right)  \nonumber \\
                              &=& -1+ \dfrac{1}{N}\sum_{i=1}^m w_i \left( \sum_{j\sim i} \frac{1}{v_i+v_j}\right)  \nonumber \\
                              &=&-1+\dfrac{1}{N}\sum_{\{i,j\}\in E} \dfrac{w_i+w_j}{v_i+v_j}\cdot
\end{eqnarray}
By Lemma \ref{deterministic_inequality} below, (\ref{general, lyapunov derivative}) is non-negative with equality if and only if $v=w$.
This immediately implies that $H^{-1}[c^0, 0]$ is positively invariant, and that $v(t)$ converges to $w$. This completes the proof of the claim.

\end{proof}

In the proof of Lemma \ref{unique_stable_equilibrium}, we made use of the following lemma.

\begin{lemma}  \label{deterministic_inequality}
Let $G$ be a finite, connected, not balanced-bipartite graph.
If $w$ is a non-unstable equilibrium, then
\[
f(v_1,\ldots,v_m)= \sum_{\{i,j\}\in E} \dfrac{w_i+w_j}{v_i+v_j}\ge N, \, \forall (v_1,\ldots,v_m)\in \Delta,
\]
with equality if and only if $v=w$.
\end{lemma}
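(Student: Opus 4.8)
The plan is to leverage the convexity of $f$ on $\Delta$ together with the characterization \eqref{zero_value_set} of a non-unstable equilibrium; no information about the random process is needed. First observe that $f(w)=\sum_{\{i,j\}\in E}1=N$, so one implication of the equality statement is trivial and the real content is the inequality $f\ge N$ plus the rigidity of the equality case.

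For the inequality I would avoid computing the Hessian of $f$ and instead use, edge by edge, the elementary consequence of the convexity of $s\mapsto (w_i+w_j)/s$ on $(0,\infty)$ — legitimate since $w_i+w_j\ge c>0$ for $w\in\Delta$ — namely the tangent-line bound at $s=w_i+w_j$:
\[
\frac{w_i+w_j}{v_i+v_j}\;\ge\;2-\frac{v_i+v_j}{w_i+w_j},\qquad \{i,j\}\in E,
\]
which holds for every $v\in\Delta$ because $v_i+v_j\ge c>0$. Summing over $E$ and regrouping the right-hand side by vertices gives
\[
f(v)\;\ge\;2N-\sum_{i=1}^m v_i\sum_{j\sim i}\frac{1}{w_i+w_j}.
\]
Now I invoke \eqref{zero_value_set}: since $\partial L/\partial v_i=-1+\frac1N\sum_{j\sim i}(v_i+v_j)^{-1}$, that condition says exactly that $\sum_{j\sim i}(w_i+w_j)^{-1}=N$ for $i\in P$ and $\sum_{j\sim i}(w_i+w_j)^{-1}\le N$ for $i\in Z$. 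Combined with $v_i\ge 0$ and $\sum_i v_i=1$, this yields $\sum_{i=1}^m v_i\sum_{j\sim i}(w_i+w_j)^{-1}\le N$, hence $f(v)\ge 2N-N=N$.

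For the equality case, suppose $f(v)=N$; then every inequality above is saturated, and in particular equality in the edgewise tangent bound forces $v_i+v_j=w_i+w_j$ for all $\{i,j\}\in E$. Writing $d=v-w$, this means $d_i+d_j=0$ along every edge. If $G$ is not bipartite it contains an odd cycle, and propagating the relations along that cycle forces $d_{i_0}=-d_{i_0}$ for some vertex $i_0$, hence $d\equiv 0$ by connectedness. If $G$ is bipartite with parts $A,B$, then $d$ equals a constant $a$ on $A$ and $-a$ on $B$; since $\sum_i d_i=\sum_i v_i-\sum_i w_i=0$ and $\#A\neq\#B$ (because $G$ is not balanced-bipartite), we get $a=0$. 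In both cases $v=w$.

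I expect the main obstacle to be conceptual rather than computational: spotting that the non-unstable condition \eqref{zero_value_set} is precisely the linear estimate — equality on the support $P$ of $w$, and $\le N$ on $Z$ — that is needed after summing the edgewise convexity inequalities, since otherwise $\sum_i v_i\sum_{j\sim i}(w_i+w_j)^{-1}$ need not be bounded by $N$. A minor but genuine point is that all denominators $v_i+v_j$ and $w_i+w_j$ stay bounded below by $c>0$ on $\Delta$, which is what makes the convexity step and the edge-by-edge equality analysis valid.
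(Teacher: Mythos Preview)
Your argument is correct. At the edge level you use the very same inequality as the paper: their ``elementary inequality'' $\frac{x}{x+\eps}-1\ge -\frac{\eps}{x}$, with $x=w_i+w_j$ and $x+\eps=v_i+v_j$, is exactly your tangent-line bound $\frac{w_i+w_j}{v_i+v_j}\ge 2-\frac{v_i+v_j}{w_i+w_j}$, and the subsequent use of \eqref{zero_value_set} to bound $\sum_i v_i\sum_{j\sim i}(w_i+w_j)^{-1}\le N$ is identical in spirit and in detail. The equality analysis (edgewise equality forces $d_i+d_j=0$, then connectedness plus ``not balanced-bipartite'' forces $d=0$) is also the same.

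The one genuine difference is organizational. The paper splits the conclusion into two claims: (a) $w$ is a \emph{strict local} minimum, obtained from the tangent inequality for small perturbations, and (b) $f$ is strictly convex on $\Delta$, proved separately via convexity of $s\mapsto 1/s$; the global statement then follows by combining (a) and (b). You observe that the tangent inequality is valid globally on $\Delta$ (since $v_i+v_j\ge c>0$ there), so the bound $f(v)\ge N$ and the rigidity of equality come in one stroke, and the separate strict-convexity step becomes unnecessary. Your route is therefore a bit more economical; the paper's two-step decomposition, on the other hand, isolates strict convexity as a standalone fact, which is reusable elsewhere (e.g.\ in the discussion of balanced-bipartite graphs).
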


\begin{proof}

The proof of Lemma \ref{deterministic_inequality} follows from the following two claims:
\begin{enumerate}[(a)]
\item $w$ is a strict local minimum of $f(\cdot)$ in $\Delta$;
\item $f(\cdot)$ is strictly convex in $\Delta$.
\end{enumerate}

Let's prove (a).

Let $\eps=(\eps_1,\ldots,\eps_m)$. Observe that we can write any point in a neighborhood of $w$ as $w^\eps=(w_1+\eps_1, \ldots, w_m+\eps_m)$ with $\sum_{i=1}^m \eps_i=0$.
By the following elementary inequality
\[
\frac{x}{x+\eps}-1 \ge -\frac{\eps}{x}, \, \, \forall x>0, \, \eps> -x,
\]
we have
\begin{eqnarray}  \label{local minimum_general}
f(w^\eps)- f(w) &=& \sum_{\{i,j\}\in E} \left[\dfrac{w_i+w_j}{w_i+\eps_i+w_j+\eps_j}-1\right] \nonumber \\
&\ge & -\sum_{\{i,j\}\in E} \frac{\eps_i+\eps_j}{w_i+w_j}  \nonumber \\
&= & -\sum_{i=1}^m \eps_i \sum_{j\sim i} \frac{1}{w_i+w_j}\cdot
\end{eqnarray}
Since $w^\eps \in \Delta$, we must have $\eps_i\ge 0$ for any $i\in Z$. By (\ref{zero_value_set}),
\begin{eqnarray*}
\sum_{i=1}^m \eps_i \sum_{j\sim i} \frac{1}{w_i+w_j} &=&  \sum_{i\in P} \eps_i \sum_{j\sim i} \frac{1}{w_i+w_j} +\sum_{i\in Z} \eps_i \sum_{j\sim i} \frac{1}{w_j} \nonumber \\
                  &=&  \sum_{i\in P} \eps_i\cdot N + \sum_{i\in Z} \eps_i \sum_{j\sim i} \frac{1}{w_j}  \nonumber \\
                  &\le&  \sum_{i\in P} \eps_i\cdot N +\sum_{i\in Z} \eps_i \cdot N  \nonumber \\
                  &= & N\sum_{i=1}^m \eps_i = 0. \nonumber \\
\end{eqnarray*}
Then it follows that $f(w^\eps)- f(w)\ge 0$.

Notice that (\ref{local minimum_general}) has equality if and only if
\begin{equation}  \label{equality epsilon}
\eps_i+\eps_j=0,  \quad \forall \{i,j\}\in E.
\end{equation}
If $G$ is a non-bipartite graph, it has an odd cycle, then (\ref{equality epsilon}) implies $\eps=0$.
If $G$ is bipartite but not balanced-bipartite, (\ref{equality epsilon}) together with $\sum_{i=1}^m \eps_i=0$ implies $\eps=0$.
In both cases, i.e. if $G$ is not balanced-bipartite, then $f(w^\eps)- f(w)> 0$ for all small $\epsilon\not=0$. This completes the proof of (a).

Now we will prove (b).

For any $u, v\in \Delta$ and $0<t<1$, by the convexity of the function $\frac{1}{x}\, (x>0)$,
\begin{eqnarray}  \label{convex function}
f(tu+(1-t)v) &=& \sum_{\{i,j\}\in E} \left[\dfrac{w_i+w_j}{(tu_i+(1-t)v_i)+(tu_j+(1-t)v_j)}\right] \nonumber \\
& \le &  \sum_{\{i,j\}\in E} \left[t\dfrac{w_i+w_j}{u_i+u_j}+(1-t)\dfrac{w_i+w_j}{v_i+v_j}\right]   \nonumber \\
&\le &  tf(u)+(1-t)f(v).
\end{eqnarray}
Notice that (\ref{convex function}) has equality if and only if
\begin{equation}  \label{equality convex}
u_i+u_j=v_i+v_j,  \quad \forall \{i,j\}\in E.
\end{equation}
Set $g(i)=u_i-v_i$. Then (\ref{equality convex}) implies
\begin{equation}  \label{equality convex 2}
g(i)+g(j)=(u_i-v_i)+(u_j-v_j)=0,  \quad \forall \{i,j\}\in E.
\end{equation}
By a similar argument as before, if $G$ is not balanced-bipartite then
\[
f(tu+(1-t)v)- \left(tf(u)+(1-t)f(v)\right)<0,\ \forall\, u\neq v.
\]
This implies that $f(\cdot)$ is strictly convex. We completes the proof of (b).
\end{proof}

By (\ref{total probability 1}) and Lemma \ref{unique_stable_equilibrium}, $x(n)$ then converges almost surely to a {\em unique} non-unstable equilibrium $w$. Hence, Theorem \ref{not_balanced_bipartite} holds with $v(G)=w$, which is characterized by the conditions in (\ref{zero_value_set}).

\section{Regular bipartite graphs: Proof of Theorem \ref{bipartite}}  \label{section_bipartite}

Let $x(t)$ denote the interpolated process of $x(n)$:
\[
x(t)=\sum_{n\ge 0} \left(x(n)+\frac{t-\tau_n}{\gamma_n} (x(n+1)-x(n)) \right)1_{[\tau_n, \tau_{n+1})}(t),
\]
where $\tau_n=\sum_{k=0}^{n-1} \gamma_k$. To prove the convergence of $x(n)$ in Theorem \ref{bipartite}, it suffices to 
prove the convergence of $x(t)$. 

Let $\Phi=\Phi_t(x)$ be the semiflow induced by (\ref{definition F}) where
$t\ge 0$ is the time parameter and $\Phi_0(x)=x$.
Then the following lemma gives a quantitative estimate on how well the interpolated
process can be approximated by the semiflow $\Phi$.

\begin{lemma}{\cite[Proposition 8.3]{benaim1999dynamics}} \label{exponential_approximation}
Almost surely,
\[
\sup_{T>0} \limsup_{t\to \infty} \frac{1}{t} \log \left(\sup_{0\le h \le T} d(x(t+h), \Phi_h(x(t))) \right)\le -1/2.
\]
\end{lemma}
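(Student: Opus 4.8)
\noindent\emph{Proof strategy.} This statement is exactly \cite[Proposition 8.3]{benaim1999dynamics}, so the plan is simply to verify its hypotheses in our setting, i.e.\ to reduce the bound to (i) a tail estimate on the noise martingale and (ii) a Gronwall comparison between the interpolated process and the semiflow. Set $M_n=\sum_{i=0}^{n-1}\gamma_i u(i)$. Since $\delta_{i\leftarrow j}(n+1)\in\{0,1\}$ and the conditional probabilities in \eqref{transition probability 2} lie in $[0,1]$, the vectors $u(n)$ are uniformly bounded (indeed $\|u(n)\|_1\le \tfrac1N\sum_i\deg(i)=2$), so $M_n$ is an $L^2$-martingale whose residual quadratic variation satisfies $\mathbb E\big(\|M_\infty-M_n\|^2\big)=\sum_{i\ge n}\gamma_i^2\,\mathbb E\big(\|u(i)\|^2\mid\mathcal F_i\big)\le C\sum_{i\ge n}\gamma_i^2\le C'/n$. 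Because $\gamma_i=1/(N_0/N+i+1)$, we have $\tau_n=\sum_{i<n}\gamma_i=\log n+O(1)$, hence $1/n\asymp e^{-\tau_n}$ and $\mathbb E\big(\|M_\infty-M_n\|^2\big)\le C'' e^{-\tau_n}$.

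The first key step is then the noise estimate: for every $\eps>0$, almost surely, for all $n$ large,
\[
\sup_{k\ge n}\Big\|\sum_{i=n}^{k-1}\gamma_i u(i)\Big\|\le e^{-(1/2-\eps)\tau_n}.
\]
This follows from Doob's maximal inequality applied to the martingale $(M_k-M_n)_{k\ge n}$, giving $\mathbb P\big(\sup_{k\ge n}\|M_k-M_n\|\ge a\big)\le C'' e^{-\tau_n}/a^2$, together with a Borel--Cantelli argument along a subsequence $n_j$ with $\tau_{n_j}\sim j\delta$ (monotonicity in $n$ fills in the gaps). Note that the $L^2$ control alone already produces the rate $1/2$ — no exponential moments are needed; this is the point where the normalization $\gamma_n\asymp e^{-\tau_n}$ and boundedness of $u(n)$ are matched against the target decay $e^{-\tau_n/2}$.

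The second step is the comparison. Fix $T>0$ and $t\ge0$, and pick $n$ with $\tau_n\le t<\tau_{n+1}$; up to negligible $O(\gamma_n)=O(e^{-\tau_n})$ corrections it suffices to take $t=\tau_n$. Over $[\tau_n,\tau_n+T]$ the interpolated process $x(\cdot)$ makes $O(T/\gamma_n)$ linear steps inside the compact positively invariant set $\Delta$, on which $F$ is Lipschitz (constant $L$) and bounded. Writing each step's increment as its drift $\gamma_i F(x(\cdot))$ plus $\gamma_i u(i)$, summing, and comparing with $\Phi_h(x(\tau_n))=x(\tau_n)+\int_0^h F(\Phi_s(x(\tau_n)))\,ds$, a standard discretization bound gives
\[
d\big(x(\tau_n+h),\Phi_h(x(\tau_n))\big)\le L\int_0^h d\big(x(\tau_n+s),\Phi_s(x(\tau_n))\big)\,ds+2\sup_{k\ge n}\Big\|\sum_{i=n}^{k-1}\gamma_i u(i)\Big\|+C_T\gamma_n,
\]
where $C_T\gamma_n$ is the piecewise-linear-versus-integral error. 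Gronwall's inequality and the noise estimate then yield, for $n$ large, $\sup_{0\le h\le T}d\big(x(\tau_n+h),\Phi_h(x(\tau_n))\big)\le e^{LT}\big(2e^{-(1/2-\eps)\tau_n}+C_T\gamma_n\big)\le C_T' e^{-(1/2-\eps)\tau_n}$.

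Finally, take logarithms and divide by $t$: since $\tau_n/t\to1$ as $t\in[\tau_n,\tau_{n+1})\to\infty$ (because $\gamma_n\to0$), we obtain $\limsup_{t\to\infty}\tfrac1t\log\big(\sup_{0\le h\le T}d(x(t+h),\Phi_h(x(t)))\big)\le-(1/2-\eps)$ almost surely. The inner quantity is nondecreasing in $T$, so $\sup_{T>0}$ is the countable supremum over $T\in\mathbb N$; intersecting the corresponding almost-sure events and letting $\eps=1/k\downarrow0$ along $k\in\mathbb N$ gives the claim. The only genuinely delicate ingredient is the martingale tail estimate of the second paragraph; the Gronwall and discretization arguments are routine.
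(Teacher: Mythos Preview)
Your argument is correct and self-contained. The paper itself does not prove this lemma: it is quoted verbatim from \cite[Proposition 8.3]{benaim1999dynamics}, with only a footnote recording that the exponent $-1/2$ equals $\tfrac{1}{2}\limsup_{n\to\infty}\frac{\log\gamma_n}{\tau_n}$ and that this limit is $-1$ when $\gamma_n=O(1/n)$. So you have in fact done more than the paper does --- you have unpacked the cited proposition in this specific setting by isolating the two standard ingredients (the $L^2$/Doob tail bound on the noise martingale giving rate $e^{-(1/2-\eps)\tau_n}$, and the Gronwall comparison on $[\tau_n,\tau_n+T]$). One minor remark: your ``monotonicity in $n$'' phrasing in the Borel--Cantelli step is slightly loose, since $\sup_{k\ge n}\|M_k-M_n\|$ is not itself monotone in $n$; the usual patch is to write $\|M_k-M_n\|\le\|M_k-M_{n_j}\|+\|M_n-M_{n_j}\|\le 2\sup_{k\ge n_j}\|M_k-M_{n_j}\|$ for $n_j\le n<n_{j+1}$ and absorb the resulting constant and the $O(\delta)$ shift in $\tau$ into $\eps$, which is routine and does not affect the conclusion.
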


The right-hand side of the inequality above depends on the decrease rate of $\gamma_n$\footnote{More specifically, it
equals $\frac{1}{2}\limsup_{n\to \infty}\frac{\log\gamma_n}{\tau_n}$. If $\gamma_n=O(1/n)$, then $\frac{1}{2}\limsup_{n\to \infty}\frac{\log\gamma_n}{\tau_n}=-\frac{1}{2}$.}.
In \cite{benjamini2012generalized}, the authors proved that when $G$ is regular bipartite, the distance between $x(n)$ and $\Omega$ converges to zero.
\begin{lemma}{\cite[Theorem 1(b)]{benjamini2012generalized}} \label{limit_set_bipartite}
Let $G$ be a finite, regular, connected and bipartite graph, then $\lim_{n\to \infty} d(x(n), \Omega)=0$ almost surely.
\end{lemma}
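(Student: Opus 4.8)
The plan is to reduce, via Proposition \ref{limit_set_theorem}, to a purely deterministic statement about the equilibria set $\Lambda$, supplemented by a single appeal to Lemma \ref{zero probability unstable}. Since the limit set of $\{x(n)\}$ is almost surely a connected subset of $\Lambda$, it suffices to prove that $\Lambda=\Omega\cup U$, where $U$ is a \emph{finite} set of unstable equilibria lying at positive distance from $\Omega$: a connected subset of such a $\Lambda$ is then either contained in $\Omega$ or reduces to a single point of $U$, and the latter event has probability $0$ by Lemma \ref{zero probability unstable} summed over the finitely many points of $U$. I would first record the easy structural facts: a regular bipartite graph is automatically balanced-bipartite (count the edges from each side of the bipartition), so $L$ of (\ref{definition L}) is concave on $\Delta$ (its Hessian is negative semidefinite, being a sum of Hessians of the concave maps $\log(v_i+v_j)$); and for a $d$-regular graph on $m$ vertices one has $N=dm/2$, so every $v\in\Omega$ (cf. (\ref{definition Omega})) has $v_i+v_j=2/m$ on each edge and hence $\partial L/\partial v_i=-1+\frac1N\sum_{j\sim i}\frac{1}{v_i+v_j}=0$, giving $\Omega\subseteq\Lambda$.

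The first genuine step is to identify $\Omega$ with the set of maximizers of $L$ on $\Delta$. By Jensen's inequality for the strictly concave $\log$, $L(v)=-1+\frac1N\sum_{\{i,j\}\in E}\log(v_i+v_j)\le -1+\log\!\big(\tfrac1N\sum_{\{i,j\}\in E}(v_i+v_j)\big)=-1+\log(d/N)=-1+\log(2/m)$, with equality iff $v_i+v_j$ is constant along edges; by connectedness of $G$ that constant equals $2/m$ and $v$ is constant on each side of the bipartition, i.e. $v\in\Omega$. Next, every non-unstable equilibrium $w$ is such a maximizer: by the characterization (\ref{zero_value_set}), $\partial L/\partial v_i(w)=0$ when $w_i>0$ and $\le 0$ when $w_i=0$, so for any $u\in\Delta$ the one-sided derivative $\frac{d}{dt}\big|_{t=0^+}L\big((1-t)w+tu\big)=\sum_i \frac{\partial L}{\partial v_i}(w)\,(u_i-w_i)\le 0$, and concavity of $L$ gives $L(u)\le L(w)$. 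Hence every non-unstable equilibrium lies in $\Omega$; equivalently, $\Lambda\setminus\Omega$ consists only of unstable equilibria. Moreover an equilibrium with full support $[m]$ satisfies $\nabla L=0$, hence (by concavity) is a global maximizer, hence lies in $\Omega$; so $U:=\Lambda\setminus\Omega$ consists of unstable equilibria with support strictly smaller than $[m]$.

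It remains to show $U$ is finite. For a face $\Delta_S$ with $S\subsetneq[m]$, I would show that at any equilibrium $v$ supported exactly on $S$ the Hessian of $L$ restricted to the tangent space of $\Delta_S$ is negative definite: that quadratic form in a tangent vector $h$ equals $-\sum_{\{i,j\}\subseteq S}(h_i+h_j)^2/(v_i+v_j)^2-\sum_{i\in S,\ j\notin S,\ i\sim j}h_i^2/v_i^2$, and its vanishing forces $h_i=0$ on every vertex of $S$ incident to the complement and $h_i+h_j=0$ along every edge inside $S$. Since $G$ is connected and $S\neq[m]$, each connected component of $G[S]$ has an edge to $[m]\setminus S$, so propagating these relations yields $h\equiv 0$. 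Thus $L|_{\Delta_S}$ is strictly concave near $v$, so each face carries at most one equilibrium in its relative interior, and $\Lambda$ is the union of $\Omega$ with finitely many isolated (unstable) points. As $\Omega$ is compact, $\mathrm{dist}(\Omega,U)>0$, and the reduction of the first paragraph applies: almost surely the limit set is contained in $\Omega$, i.e. $d(x(n),\Omega)\to 0$.

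The main obstacle is this last step — the finiteness of $U$, i.e. the local strict concavity of $L$ on the proper faces $\Delta_S$. One must handle the case that $G[S]$ is disconnected, and the argument genuinely uses connectedness of $G$ (to ensure every component of $G[S]$ is joined to $[m]\setminus S$). I would also stress that the appeal to Lemma \ref{zero probability unstable} is not avoidable: for general regular bipartite $G$ there can be unstable boundary equilibria outside $\Omega$, so $\Lambda=\Omega$ need not hold; it is precisely the almost-sure formulation that lets us discard them.
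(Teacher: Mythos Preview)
The paper does not actually prove this lemma; it is merely stated as a citation of \cite[Theorem 1(b)]{benjamini2012generalized} and invoked without proof in Section~\ref{section_bipartite}. Your proposal is therefore an independent proof rather than a comparison target, and it is correct.

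In fact, your argument can be assembled almost entirely from pieces that \emph{do} appear in this paper. Proposition~\ref{limit_set_theorem} gives that the limit set is a connected subset of $\Lambda$; Lemma~\ref{zero probability unstable} discards unstable equilibria; and the observation in the proof of Corollary~\ref{no_interior_equilibrium} --- that $L|_{\Delta_S}$ is strictly concave whenever $S\neq[m]$, for any connected $G$ --- yields the finiteness of boundary equilibria. Your Hessian computation is one route to this last point, but the paper's argument (via the equality condition for concavity of $\log$, equation~\eqref{face_concave}) is a shade cleaner and global rather than pointwise: it shows directly that $L(cu+(1-c)v)=cL(u)+(1-c)L(v)$ on $\Delta_S$ forces $u_i-v_i=-(u_j-v_j)$ along edges, which then propagates to $u=v$ since $G$ is connected and some coordinate already vanishes on $\Delta_S$. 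Either route works; note also that your Hessian argument does not actually need $v$ to be an equilibrium, so it too gives global strict concavity on each proper face.

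The only genuine content beyond those cited pieces is your identification of the maximizers of $L$ with $\Omega$ (Jensen plus regularity of $G$) together with the concavity/first-order argument that every non-unstable equilibrium is such a maximizer. Both steps are correct as written. Your closing caveat --- that $\Lambda=\Omega$ can genuinely fail, so the appeal to Lemma~\ref{zero probability unstable} over a finite $U$ is essential for the almost-sure statement --- is exactly right.
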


When $G$ is $r-$regular and bipartite, one can explicitly calculate $JF(v)$, the jacobian matrix of $F$
at a point $v=(p,\ldots,p,q,\ldots,q)\in \Omega$. Let $A$ and $B$ denote
the bipartition of $G$ as before. If we label the vertices of $A$ from $1$ to $m/2$, the vertices of $B$ from $1$ to $m/2$,
and if we let $M=(m_{ij})$ be the $m/2\times m/2$ adjacency matrix of the edges connecting vertices of $A$ to
vertices of $B$ (i.e. $m_{ij}=1$ when the $i$-th vertex of $A$ is adjacent to the $j$-th vertex of $B$),
then $JF(v)$ takes the form
\begin{eqnarray*}
JF(v)=-I+\dfrac{m}{2r}
\left[
\begin{array}{ccc}
rqI&&-pM\\
&&\\
-qM^t&&rpI\\
\end{array}
\right]\cdot
\end{eqnarray*}
Let $l$ be the vector in the tangent space of $\Delta$ with coordinates
\begin{eqnarray*}
l_i&=&\left\{\begin{array}{ll}
1&\text{ if  }i\in A, \\
-1&\text{ if  }i\in B.\\
\end{array}\right.
\end{eqnarray*}
Then it is easy to check that $JF(v)\cdot l=0$ for any $v\in \Omega$. This implies that the jacobian matrix has zero eigenvalue
along the direction of $\Omega$. Let 
$v_{\pm\infty}$ denote the two endpoints of $\Omega$. One can easily see that $JF(v_{\pm\infty})$ has multiple zero eigenvalues. In the interior of $\Omega$, the authors in \cite{benjamini2012generalized} proved that in any direction transverse to $\Omega$, the
eigenvalues have negative real part.

\begin{lemma}{\cite[Lemma 10.1]{benjamini2012generalized}} \label{lemma hyperbolic attractor}
Let $v\in {\rm int}(\Omega)$. Any eigenvalue of $JF(v)$ different from $0$ has negative real part,
and $0$ is a simple eigenvalue of $JF(v)$.
\end{lemma}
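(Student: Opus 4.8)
The plan is to use the explicit form of $JF(v)$ displayed above and reduce the spectral problem to a family of $2\times2$ matrices indexed by the singular values of $M$. First I would simplify notation: since $p+q=2/m$, put $\alpha=mp/2$ and $\beta=mq/2$, so that $\alpha+\beta=1$ and, because $v\in{\rm int}(\Omega)$, both $\alpha$ and $\beta$ are strictly positive. Rewriting the matrix above then gives
\[
JF(v)=\begin{pmatrix} -\alpha I & -\tfrac{mp}{2r}M \\ -\tfrac{mq}{2r}M^{t} & -\beta I \end{pmatrix}.
\]
Next I would introduce a singular value decomposition $M=U\Sigma V^{t}$, with $U,V$ real orthogonal $(m/2)\times(m/2)$ matrices and $\Sigma=\mathrm{diag}(\sigma_1,\dots,\sigma_{m/2})$, $\sigma_1\ge\cdots\ge\sigma_{m/2}\ge0$. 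Conjugating $JF(v)$ by $\mathrm{diag}(U,V)$ and permuting coordinates shows that $JF(v)$ is similar to the direct sum of the $m/2$ blocks
\[
B_k=\begin{pmatrix} -\alpha & -\tfrac{mp}{2r}\,\sigma_k \\ -\tfrac{mq}{2r}\,\sigma_k & -\beta \end{pmatrix},\qquad k=1,\dots,m/2,
\]
so that the spectrum of $JF(v)$ is the union, with multiplicity, of the spectra of the $B_k$.

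Then I would analyze each $B_k$ through its trace and determinant. One has $\mathrm{tr}\,B_k=-(\alpha+\beta)=-1$ and, using $\alpha\beta=m^{2}pq/4$,
\[
\det B_k=\alpha\beta-\frac{m^{2}pq}{4r^{2}}\,\sigma_k^{2}=\frac{m^{2}pq}{4r^{2}}\,(r^{2}-\sigma_k^{2})\ \ge\ 0 .
\]
Since $G$ is $r$-regular and bipartite, every singular value of $M$ lies in $[0,r]$; since $G$ is connected, $r$ is a simple eigenvalue of the bipartite adjacency matrix of $G$ (Perron--Frobenius for connected graphs), whence $\sigma_1=r$ and $\sigma_k<r$ for $k\ge2$. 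For $k=1$ this gives $\det B_1=0$, so $B_1$ has eigenvalues $0$ and $-1$. For $k\ge2$, the interior hypothesis $p,q>0$ gives $\det B_k>0$, and together with $\mathrm{tr}\,B_k=-1<0$ this forces both eigenvalues of $B_k$ to have strictly negative real part — two negative reals when the discriminant $1-4\det B_k$ is nonnegative, and a complex-conjugate pair with real part $-1/2$ otherwise. Collecting the contributions of all blocks, $0$ occurs as an eigenvalue of $JF(v)$ exactly once and all remaining eigenvalues have negative real part, which is the assertion.

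The rewriting of the matrix and the $2\times2$ trace/determinant computation are routine. The two points needing care are: (i) the claim that $\sigma_1=r$ is a \emph{simple} singular value of $M$ — equivalently, that the adjacency matrix of the connected bipartite graph $G$ has $r$ as a simple eigenvalue — together with keeping track of multiplicities across the SVD blocks; and (ii) pinpointing where the openness of $v$ in $\Omega$ is used, namely in the strict inequality $\det B_k>0$ for $k\ge2$ (at an endpoint $v_{\pm\infty}$ one of $p,q$ vanishes, several $\det B_k$ collapse to $0$, and additional zero eigenvalues appear, consistent with the remark preceding the lemma). I expect (i), and the bookkeeping it entails, to be the only genuine obstacle; the rest is elementary linear algebra.
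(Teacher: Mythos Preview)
Your argument is correct. The rewriting of $JF(v)$ using $\alpha=mp/2$, $\beta=mq/2$ is accurate, the SVD block-diagonalization into the $2\times2$ blocks $B_k$ is valid, and the trace/determinant analysis is clean: $\mathrm{tr}\,B_k=-1$, $\det B_k=\tfrac{m^2pq}{4r^2}(r^2-\sigma_k^2)$, so $B_1$ contributes $\{0,-1\}$ and each $B_k$ with $k\ge2$ contributes two eigenvalues with negative real part. Your appeal to Perron--Frobenius for the simplicity of $\sigma_1=r$ is the right tool, since connectedness of $G$ makes the adjacency matrix irreducible and hence $r$ a simple eigenvalue; the bipartite symmetry $\lambda\mapsto-\lambda$ then forces $\sigma_1=r$ to be a simple singular value of $M$.

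As for comparison: the present paper does not give its own proof of this lemma --- it is quoted directly from \cite{benjamini2012generalized} without argument. Your SVD reduction to $2\times2$ blocks is a clean, self-contained route, and the two points you flagged as ``needing care'' (simplicity of the top singular value; the role of $p,q>0$) are exactly the substantive ingredients. Nothing is missing.
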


Lemma \ref{lemma hyperbolic attractor} says that the interior of $\Omega$ attracts exponentially along any 
direction transverse to $\Omega$. This is a strong 
property about $\Omega$, and it enables us to effectively 
work with the dynamics of the ODE at the interior of $\Omega$.  For a fixed interval $J\subset\Omega$ not containing $v_{\pm\infty}$, and
a small neighborhood $U$ of $J$ in $\Delta$, by Lemma \ref{lemma hyperbolic attractor}, there is a submanifold $\mathcal F_x$ for  each $x\in U$ such that:
\begin{enumerate}[$\bullet$]
\item $\mathcal F_x \pitchfork \Omega$ is one point. We denote this point by $\pi(x)$.
\item The dynamics of the ODE on $\mathcal F_x$ is
exponentially contracting to $\pi(x)$. The speed of convergence
depends on the non-zero eigenvalues of $JF(\pi(x))$.
\end{enumerate}
This follows from the theory of invariant manifolds for normally hyperbolic sets (see Theorem 4.1 of \cite{hirsch1977invariant}).

Thus we have a map $\pi:U\to \Omega$. Notice that $\pi$
is not a projection (it is not even linear), but  $\mathcal F_x$ depends smoothly on $x$. Hence if $U$ is small, then $\pi$ is 2-Lipschitz: 
\begin{equation}  \label{2Lipschitz}
d(\pi(x),\pi(y))\le
2d(x,y),\forall\,x,y\in U.
\end{equation}

Now fix a small parameter $\varepsilon>0$ and reduce $U$, if necessary, so that
\begin{align}\label{equation 1}
U=\{x\in\Delta:\pi(x)\in J\text{ and }d(x,\pi(x))<\varepsilon\}.
\end{align}
Let $c=\max\{{\rm Re}(\lambda):\lambda\not=0\text{ is eigenvalue of
}JF(x), x\in J\}$.
By Lemma \ref{lemma hyperbolic attractor} , $c<0$. Thus there is $K>0$ such that
\begin{align}\label{equation hyperbolicity}
d(\Phi_t(x),\pi(x))\le K e^{ct}d(x,\pi(x)), \forall\,x\in U,\forall\,t\ge 0.
\end{align}

Let $x(t)$ be an orbit that does not converge
to $v_{\pm\infty}$. By Lemma \ref{limit_set_bipartite}, this orbit has an accumulation point in the interior
of $\Omega$. Let $J\subset \Omega$ be an interval containing this point but not $v_{\pm\infty}$, and $U$ as in (\ref{equation 1}).

\begin{lemma} \label{iteration_lemma}
Let $x(t)\in U$. If $t,T$ are large enough, then
\begin{enumerate}[(i)]
\item $d(\pi(x(t+T)),\pi(x(t)))<2e^{-\frac{t}{4}}$.
\item $x(t+T)\in U$.
\end{enumerate}
\end{lemma}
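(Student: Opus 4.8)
The plan is to run one contraction–shadowing step inside $U$. The two ingredients, both available on $U$, are the hyperbolic contraction estimate (\ref{equation hyperbolicity}), which collapses the semiflow issued from $x(t)$ onto $\pi(x(t))$ at the exponential rate $e^{ct}$ with $c<0$, and the shadowing bound of Lemma \ref{exponential_approximation}, which keeps the interpolated process within $e^{-t/2+o(t)}$ of that semiflow.

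Concretely, I would first pick $T=T(t)$ large enough that $Ke^{cT}\varepsilon\le\tfrac14 e^{-t/4}$ (this forces $T$ to grow roughly linearly in $t$; see the last paragraph). Since $x(t)\in U$ gives $d(x(t),\pi(x(t)))<\varepsilon$, estimate (\ref{equation hyperbolicity}) yields
\[
d(\Phi_T(x(t)),\pi(x(t)))\le Ke^{cT}\varepsilon\le\tfrac14 e^{-t/4}.
\]
For $t$ large (given $T$), Lemma \ref{exponential_approximation} gives $d(x(t+T),\Phi_T(x(t)))\le\tfrac14 e^{-t/4}$, hence $d(x(t+T),\pi(x(t)))\le\tfrac12 e^{-t/4}$. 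Now (i) is immediate: $\pi$ is $2$-Lipschitz on $U$ by (\ref{2Lipschitz}) and fixes the points of $\Omega$ (because $\mathcal F_y\cap\Omega=\{y\}$ for $y\in\Omega$, so $\pi(\pi(x(t)))=\pi(x(t))$), whence
\[
d(\pi(x(t+T)),\pi(x(t)))=d\big(\pi(x(t+T)),\pi(\pi(x(t)))\big)\le 2\,d(x(t+T),\pi(x(t)))\le e^{-t/4}<2e^{-t/4}.
\]
For (ii), combining the two previous estimates, $d(x(t+T),\pi(x(t+T)))\le d(x(t+T),\pi(x(t)))+d(\pi(x(t)),\pi(x(t+T)))\le\tfrac12 e^{-t/4}+e^{-t/4}<\varepsilon$ for $t$ large, and $\pi(x(t+T))\in J$ since it lies within $e^{-t/4}$ of $\pi(x(t))\in J$, a displacement that is negligible once $t$ is large (in the application of the lemma $\pi(x(t))$ is kept well inside $J$). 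Therefore $x(t+T)\in U$.

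The main obstacle is matching the two regimes. Estimate (\ref{equation hyperbolicity}) only brings the transverse quantity $Ke^{cT}\varepsilon$ below $e^{-t/4}$ once $T\gtrsim t/(4|c|)$, i.e. $T$ must grow linearly in $t$, whereas the crude Gronwall form of the shadowing estimate picks up a factor $e^{LT}$ (with $L$ the Lipschitz constant of $F$ on $\Delta$) and would only control $d(x(t+T),\Phi_T(x(t)))$ for $T=o(t)$. What saves the argument is that on $U$ the semiflow is normally hyperbolic with no expanding directions: $\Omega$ is a segment of equilibria along which $JF$ has a zero eigenvalue and is strictly contracting transversally (Lemma \ref{lemma hyperbolic attractor}), so the variational flow along the orbit stays uniformly bounded and the accumulated shadowing error retains the $e^{-t/2+o(t)}$ decay \emph{uniformly in $T$}. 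Extracting this uniform-in-$T$ shadowing bound — the point where the refined shadowing techniques of \cite{benaim1996asymptotic,schreiber1997expansion} enter — is the real work; with it in hand, the two halves fit together and (i)--(ii) follow as above.
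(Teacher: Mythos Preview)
You have created an artificial obstacle by letting $T$ grow with $t$, and the gap you yourself flag (extracting a uniform-in-$T$ shadowing bound) is precisely where your argument stops being a proof and becomes a citation. The paper's proof avoids this entirely through one observation you did not use: the stable leaves $\mathcal F_x$ are \emph{flow-invariant}, so $\Phi_T(x)\in\mathcal F_x$ and hence
\[
\pi(\Phi_T(x))=\pi(x)\qquad\text{for every }x\in U,\ T\ge 0.
\]
You correctly noted that $\pi$ fixes points of $\Omega$, but this stronger identity is what does the work. With it, part (i) is immediate from the $2$-Lipschitz property and Lemma~\ref{exponential_approximation} alone:
\[
d(\pi(x(t+T)),\pi(x(t)))=d(\pi(x(t+T)),\pi(\Phi_T(x(t))))\le 2\,d(x(t+T),\Phi_T(x(t)))\le 2e^{-t/4},
\]
for any \emph{fixed} $T$ and $t$ large. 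For (ii), the paper simply requires $Ke^{cT}<\tfrac12$ (a fixed choice of $T$) and $3e^{-t/4}<\tfrac{\varepsilon}{2}$; then
\[
d(x(t+T),\pi(x(t+T)))\le 3\,d(x(t+T),\Phi_T(x(t)))+Ke^{cT}d(x(t),\pi(x(t)))<\tfrac{\varepsilon}{2}+\tfrac{\varepsilon}{2}=\varepsilon.
\]
No coupling of $T$ to $t$, no refined shadowing. Your route, by contrast, demands $Ke^{cT}\varepsilon\le\tfrac14 e^{-t/4}$, forcing $T\gtrsim t/(4|c|)$; Lemma~\ref{exponential_approximation} as stated gives, for each fixed $T$, a threshold in $t$ beyond which the error is $\le e^{-t/4}$, and does not directly yield a bound when $T$ itself depends on $t$. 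The normal-hyperbolicity heuristic you invoke to repair this is plausible but is not carried out---and, as the paper shows, is unnecessary. A further practical cost: the lemma is meant to be iterated at times $t+kT$ with $T$ fixed, producing the summable errors $e^{-(t+kT)/4}$; a $t$-dependent $T$ would complicate this downstream argument as well.
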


\begin{proof} To simplify the notation, denote $x(t)$ by $x$ and $x(t+T)$ by
$x(T)$.

Let's prove \noindent (i). Since $\pi(\Phi_T(x))=\pi(x)$, and $\pi$ is 2-Lipschitz,
\begin{align*}
d(\pi(x(T)),\pi(x))=d(\pi(x(T)),\pi(\Phi_T(x)))\le
2d(x(T),\Phi_T(x)).
\end{align*}
By Lemma \ref{exponential_approximation}, $d(x(T),\Phi_T(x))\le e^{-\frac{t}{4}}$ for large $t$, therefore 
$d(\pi(x(T)),\pi(x))\le 2e^{-\frac{t}{4}}$ for large $t$.
This proves \noindent (i). Note that \noindent (i) implies that $\pi(x(T))\in J$ for large $t$.

For \noindent (ii), we just need to estimate $d(x(T),\pi(x(T)))$. By the triangular inequality, (\ref{2Lipschitz}) and (\ref{equation hyperbolicity}), we have:
\begin{eqnarray}  \label{distance to Omega}
d(x(T),\pi(x(T)))
&\le&d(x(T),\Phi_T(x))+d(\Phi_T(x),\pi(\Phi_T(x)))+ \nonumber \\
&&d(\pi(\Phi_T(x)),\pi(x(T))) \nonumber \\
&\le&3d(x(T),\Phi_T(x))+d(\Phi_T(x),\pi(x)) \nonumber \\
&\le&3e^{-\frac{t}{4}}+Ke^{cT}d(x,\pi(x)) \nonumber \\
&\le&3e^{-\frac{t}{4}}+Ke^{cT}\varepsilon \nonumber \\
&<&\varepsilon
\end{eqnarray}
whenever $3e^{-\frac{t}{4}}<\frac{\varepsilon}{2}$ and $Ke^{cT}<\frac{1}{2}$.
\end{proof}

Note that Lemma \ref{iteration_lemma}(ii) allows us to iteratively apply Lemma \ref{iteration_lemma} to the points
$x_k:=x(t+kT),k\in \mathbb N$. Hence
$d(\pi(x_{k+1}),\pi(x_k))<2e^{-\frac{t+kT}{4}}$
for all $k\ge 0$. Because $\sum_k e^{-\frac{t+kT}{4}}<\infty$, it follows that $\pi(x_k)$
converges. In the above iterative argument, we implicitly used the fact that $\sum_k e^{-\frac{t+kT}{4}}$ can be made arbitrarily small if $t$ and $T$ are large enough. This fact guarantees that the total drift of $\pi(x_k)$ from $\pi(x)$ is arbitrarily small so that 
$\pi(x_k)\in J$ for all $k\ge 0$, and thus the iterative argument works.

Also note that (\ref{distance to Omega}) holds for all $k\geq 0$:
\begin{equation}  \label{sequence foaliation distance}
 d(x_k,\pi(x_k))\le 3e^{-\frac{t+(k-1)T}{4}}+Ke^{cT}d(x_{k-1},\pi(x_{k-1})).
\end{equation}
Let $\lambda=Ke^{cT}$. Iterating (\ref{sequence foaliation distance}), we get
\begin{eqnarray}
 d(x_k,\pi(x_k))&\le& 3e^{-\frac{t}{4}}\left( e^{-\frac{(k-1)T}{4}}+\lambda e^{-\frac{(k-2)T}{4}} +\cdots+\lambda^{k-1}\right)+\lambda^k d(x,\pi(x))  \nonumber  \\
&\le& 3e^{-\frac{t}{4}}k \left(\max{ \left\{e^{-\frac{T}{4}},\lambda\right\}} \right)^{k-1}+\lambda^k d(x,\pi(x)).  \nonumber  
\end{eqnarray}
When $T$ is large, $\max{ \left\{e^{-\frac{T}{4}},\lambda\right\}} <1$, hence $d(x_k,\pi(x_k)) \to 0$ as $k\to \infty$. 

Let $x_0\in J$ be the limit of $\pi(x_k)$. By the triangular inequality
$$
 d(x_k,x_0)\leq d(x_k,\pi(x_k)) +d(\pi(x_k),x_0).
$$
When $k$ tends to infinity, we have just proved that both  $d(\pi(x_k),x_0)$ and $d(x_k,\pi(x_k))$ go to zero, thus $d(x_k,x_0)$ goes to zero. This proves that $\lim_{k\to \infty} x_k$ exists, with $\lim_{k\to \infty} x_k=x_0\in J$. 

For any $s \in [t+kT, t+(k+1)T)$,  by the triangular inequality and Lemma \ref{exponential_approximation}
\begin{eqnarray*}
d(x(s), x_0) &=& d(x(s), \Phi_{s-(t+kT)}(x_0)) \\
             &\le &d(x(s), \Phi_{s-(t+kT)}(x_k)) +d(\Phi_{s-(t+kT)}(x_k), \Phi_{s-(t+kT)}(x_0)) \\
            &\le &e^{-\frac{t+kT}{4}} + c(T)d(x_k,x_0),
\end{eqnarray*}
where $c(T)>0$ is the supremum of the Lipschitz constants of $\Phi_\delta,\delta\in[0,T]$. Therefore, $\lim_{t\to \infty} x(t)=x_0$. 
This completes the proof of Theorem \ref{bipartite}.

\section{Non-regular balanced-bipartite graphs}\label{balanced_not_regular}

We now discuss non-regular balanced-bipartite graphs. It is the only family of graphs
that we do not have precise information on the convergence of $x(n)$. 

\begin{lemma} \label{BBNR}
Let $G$ be a non-regular balanced-bipartite graph. Then $\Lambda \cap{\rm int}(\Delta)$ is either empty or an interval.
\end{lemma}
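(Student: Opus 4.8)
The plan is to identify the interior equilibria with the critical points of the unconstrained function $L$ of (\ref{definition L}) on $\R^m_{>0}$, and to show that for a connected bipartite graph this critical set, when nonempty, is a single relatively open segment parallel to the bipartition vector $l$ defined by $l_i=1$ for $i\in A$ and $l_i=-1$ for $i\in B$, where $V=A\cup B$ is the bipartition.

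First I would reduce to $L$. By (\ref{expression of velocity}), $F_i(v)=v_i\,\partial L/\partial v_i$, so a point $v\in{\rm int}(\Delta)$, where all coordinates are positive, is an equilibrium if and only if $\nabla L(v)=0$; thus $\Lambda\cap{\rm int}(\Delta)=\{v\in{\rm int}(\Delta):\nabla L(v)=0\}$. Next I would exploit the direction $l$. Since $G$ is \emph{balanced}-bipartite, $\sum_i l_i=\#A-\#B=0$, so $\sum_i v_i$ is unchanged under $v\mapsto v+tl$; since $G$ is \emph{bipartite}, every edge joins $A$ to $B$, so $l_i+l_j=0$ for all $\{i,j\}\in E$, hence every sum $v_i+v_j$ with $\{i,j\}\in E$ is unchanged under $v\mapsto v+tl$, and a one-line computation from (\ref{definition F}) gives $\partial_i L(v+tl)=\partial_i L(v)$ whenever $v,v+tl\in\R^m_{>0}$. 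Consequently, if $v\in\Lambda\cap{\rm int}(\Delta)$ and $v+tl\in{\rm int}(\Delta)$ then $v+tl\in\Lambda\cap{\rm int}(\Delta)$ as well; that is, $\Lambda\cap{\rm int}(\Delta)$ is a union of relatively open sub-intervals of lines parallel to $l$.

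It then remains to show there is at most one such line. Here I would use that $L$ is concave on $\R^m_{>0}$ (a sum of the linear term $-\sum v_i$ and the concave terms $\tfrac1N\log(v_i+v_j)$), with Hessian at $v$ given by the negative semidefinite form $z\mapsto-\tfrac1N\sum_{\{i,j\}\in E}(v_i+v_j)^{-2}(z_i+z_j)^2$, whose null space is $\{z:z_i+z_j=0\ \forall\{i,j\}\in E\}$; for a connected bipartite graph this null space equals exactly $\R l$ (propagate $z_i=-z_j$ along paths from a fixed vertex: bipartiteness guarantees consistency, connectedness fixes the common value). Given two interior equilibria $u,w$, the function $g(t)=L\big(w+t(u-w)\big)$ is concave on $[0,1]$ with $g'(0)=\nabla L(w)\cdot(u-w)=0$ and $g'(1)=\nabla L(u)\cdot(u-w)=0$; since $g'$ is nonincreasing this forces $g'\equiv0$, hence $g''\equiv0$, hence $(u-w)_i+(u-w)_j=0$ on every edge, i.e. $u-w\in\R l$. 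Thus $\Lambda\cap{\rm int}(\Delta)$ lies on a single line $v^{*}+\R l$, and by the previous step it equals $\{v^{*}+tl:v^{*}+tl\in{\rm int}(\Delta)\}$; because the constraints $\sum v_i=1$ and $v_i+v_j\ge c$ are $l$-invariant, this set is carved out purely by the inequalities $v^{*}_i+tl_i>0$, so that $\Lambda\cap{\rm int}(\Delta)$ is either empty or a relatively open, nondegenerate interval, as claimed.

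I expect the one delicate point to be the identification of the Hessian null space with $\R l$ for connected bipartite $G$, together with keeping straight which hypothesis plays which role: bipartiteness is what makes the null space one-dimensional, while balancedness is what makes $l$ tangent to the simplex, so that translation along $l$ stays inside ${\rm int}(\Delta)$ — indeed, without balancedness the same computation would force $\Lambda\cap{\rm int}(\Delta)$ to meet any given line parallel to $l$ in at most one point, collapsing the segment to (at most) a single equilibrium.
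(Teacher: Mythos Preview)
Your proposal is correct and follows essentially the same route as the paper: identify interior equilibria with critical points of $L$, observe that translation by the bipartition vector $l$ preserves equilibria (bipartiteness gives $l_i+l_j=0$ on edges, balancedness gives $\sum l_i=0$), and then use concavity of $L$ to force any two interior equilibria to differ by a multiple of $l$. The only cosmetic difference is that the paper phrases the last step as ``both are global maxima of the concave $L$, hence $L$ is constant on the segment, hence by strict concavity of $\log$ the edge sums agree,'' whereas you reach the same edge-sum condition via $g'(0)=g'(1)=0\Rightarrow g''\equiv 0$ and the explicit Hessian null space; the content is identical.
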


\begin{proof}
For a non-regular balanced-bipartite graph, the corresponding ODE can \textit{a priori} have either no or at least one interior equilibrium
in $\Delta$.
Now suppose that the ODE has an interior equilibrium $v$.
Let $V=A\cup B$  be the bipartition of $G$. Then for any $\eta$ with $-\min_{i\in A}v_i<\eta<\min_{i\in B}v_i$,
the points $u^{\eta}=(u^{\eta}_1,\ldots,u^{\eta}_m)$ defined by
\begin{eqnarray}  \label{interval_equilibria}
u^{\eta}_i&=&\left\{\begin{array}{ll}
v_i+\eta&\text{ if  }i\in A, \\
v_i-\eta&\text{ if  }i\in B,\\
\end{array}\right.
\end{eqnarray}
form an interval of interior equilibria.

Furthermore, if $h$ is another interior equilibrium, we will prove that $h$ is contained in this interval.  Recall that
$$L(v)=L(v_1,\ldots,v_m)=-\sum_{i=1}^m v_i+\dfrac{1}{ N}\sum_{\{i,j\}\in E}\log{(v_i+v_j)}.$$
By (\ref{expression of velocity}),
$h$ and $v$ are critical points of $L$.
Since $L$ is concave, $h$ and $v$ are global maxima of $L$ in $\Delta$ and $L(h)=L(v)$.
Then for any $0<c<1$, the following holds:
\begin{equation} \label{concavity_condition}
L(ch+(1-c)v)=cL(h)+(1-c)L(v).
\end{equation}
Since the $\log$ function is strictly concave, (\ref{concavity_condition}) yields $h_i+h_j=v_i+v_j$ for every $\{i,j\}\in E$, i.e.
\begin{equation} \label{equality_condition}
h_i-v_i=-(h_j-v_j)\, ,\ \ \forall\, \{i,j\}\in E.
\end{equation}
Hence, there exists $\eta \in (-\min_{i\in A}v_i,\min_{i\in B}v_i)$, such that
\begin{eqnarray*}
h_i&=&\left\{\begin{array}{ll}
v_i+\eta&\text{ if  }i\in A, \\
v_i-\eta&\text{ if  }i\in B,\\
\end{array}\right.
\end{eqnarray*}
which completes the proof.
\end{proof}

Observe that the proof of Lemma \ref{BBNR} works for any balanced-bipartite graph. Thus, we have proved that for a balanced-bipartite graph, the corresponding $F$ either does not have an interior equilibrium, or has an interval of interior equilibria.

\begin{corollary} \label{no_interior_equilibrium}
Let $G$ be a non-regular balanced-bipartite graph. Assume that $F$ does not have an interior equilibrium, then $\Lambda$ is finite, and $x(n)$ converges to an element of $\Lambda$ almost surely.
\end{corollary}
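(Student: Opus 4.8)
The plan is to reduce the statement to the finiteness of the equilibria set $\Lambda$: once $\Lambda$ is known to be finite, Proposition \ref{limit_set_theorem} gives that the limit set of $x(n)$ is a connected subset of $\Lambda$, hence a single point (the limit set of the bounded sequence $x(n)\in\Delta$ is nonempty and compact), so $x(n)$ converges almost surely to an element of $\Lambda$, exactly as in the proof of Corollary \ref{finite_limit_points}.

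To prove $\Lambda$ finite, I would use the decomposition of $\Lambda$ into critical points of $L|_{\Delta_S}$ over the faces $\Delta_S$ of $\Delta$, noting that only finitely many faces are relevant: one for each $S\subseteq[m]$, and in fact only for $S$ a vertex cover of $G$, since an edge with both endpoints outside $S$ would force $v_i+v_j=0$, which is excluded in $\Delta$. The hypothesis is precisely that the full-support face $\Delta_{[m]}$ carries no equilibrium, so it remains to show that each proper face $\Delta_S$, $S\subsetneq[m]$, carries at most one equilibrium.

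The key step — and the main obstacle — is to show that $L$ restricted to (the affine hull of) any proper face $\Delta_S$ is \emph{strictly} concave; then it has at most one critical point on $\Delta_S$, and in particular at most one equilibrium lies there. Computing the Hessian of this restriction, a tangent vector $w$ (so $w_i=0$ for $i\notin S$) lies in its kernel exactly when $w_i+w_j=0$ for every edge $\{i,j\}$ with $i,j\in S$ and $w_i=0$ for every $i\in S$ that has a neighbour in $[m]\setminus S$. I would finish by a connectivity argument: since $G$ is connected and $S\neq[m]$, every connected component $C$ of the induced subgraph $G[S]$ contains a vertex having a neighbour outside $S$ (otherwise $C$ would be a connected component of $G$, forcing $S=[m]$); on such a component the relations $w_i+w_j=0$ along edges either force $w\equiv0$ directly (if $C$ contains an odd cycle) or determine $w$ up to one sign-alternating scalar (if $C$ is bipartite), and the vanishing of $w$ at the boundary-adjacent vertex kills that scalar. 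Hence $w\equiv0$ on $S$, proving strict concavity. (Equivalently, one can mirror the proof of Lemma \ref{BBNR}: two distinct equilibria on the same face $\Delta_S$ are equal-valued global maxima of the concave $L|_{\Delta_S}$, so strict concavity of $\log$ gives $h_i+h_j=v_i+v_j$ on edges inside $S$ and $h_i=v_i$ on edges leaving $S$, and the same connectivity argument yields $h=v$.)

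Combining, $\Lambda$ contains at most one point per vertex cover $S\subsetneq[m]$, hence is finite, and the convergence follows. The content of the argument is essentially the face-level analogue of Corollary 1.3 of \cite{benjamini2012generalized}: the terms $\log v_i$ arising from edges that leave $S$ are what break the one-dimensional degeneracy of $L$ that is present on the full simplex when $G$ is balanced-bipartite.
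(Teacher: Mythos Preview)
Your proposal is correct and follows essentially the same route as the paper: both reduce to showing that $L|_{\Delta_S}$ is strictly concave for every $S\subsetneq[m]$ and then invoke Proposition~\ref{limit_set_theorem}. The paper's argument is marginally cleaner in that, from equality in the concavity inequality, it reads $u_i+u_j=v_i+v_j$ on \emph{all} edges of $G$ (edges leaving $S$ included), so one vertex $i\notin S$ with $u_i=v_i=0$ together with the connectivity of $G$ itself forces $u=v$, avoiding your pass to components of $G[S]$; your parenthetical ``mirror of Lemma~\ref{BBNR}'' alternative is exactly this.
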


\begin{proof}
By Proposition \ref{limit_set_theorem}, we just need to prove that $\Lambda$ is finite. Since $\Lambda$ is the union of the sets of critical points of $L|_{\Delta_S}$ over all faces $\Delta_S$ and the total number of faces is finite, it suffices to prove that for each face $\Delta_S$ with $S\neq [m]$, $L|_{\Delta_S}$ is strictly concave. Fix a face $\Delta_S$ with $S\neq [m]$. Let $u,v\in\Delta_S$ and
$c\in(0,1)$. If $L(cu+(1-c)v)=cL(u)+(1-c)L(v)$, then $u_i+u_j=v_i+v_j$ for every $\{i,j\}\in E$, i.e.
\begin{equation}  \label{face_concave}
u_i-v_i=(-1)(u_j-v_j)\, ,\ \ \forall\, \{i,j\}\in E.
\end{equation}
Since $S\neq [m]$ and $u,v\in\Delta_S$, there exists some $i\notin S$ such that $u_i-v_i=0$. Because $G$ is connected, (\ref{face_concave}) implies that $u_i-v_i=0$ for all $i\in [m]$, i.e. $u=v$. This proves that 
$L|_{\Delta_S}$ is strictly concave. We complete the proof of the corollary. 
\end{proof}

Notice that the proof of Corollary \ref{no_interior_equilibrium} is general and only uses the assumption that $F$ does not have an interior equilibrium. We just proved that for any finite connected graph $G$, $L|_{\Delta_S}$ with $S\neq [m]$ is strictly concave, and hence the corresponding $F$ has at most finitely many boundary equilibria. 

For a non-regular balanced-bipartite $G$, if
$F$ does not have an interior equilibrium, by Corollary \ref{no_interior_equilibrium}, we conjecture that there is a unique non-unstable equilibrium of $F$ such that $x(n)$ almost surely converges to it. If $F$ has an interval of interior equilibria, unlike the case of regular bipartite graphs, we are not able to prove a result similar to Lemma \ref{lemma hyperbolic attractor}, and hence not able to prove the convergence of $x(n)$. But we also conjecture that this convergence holds. Combining the results we already proved, we make the conjecture below.
\begin{conjecture}  \label{the_conjecture}
Let $G$ be a finite and connected graph. Then there exists either a point $v(G)$ such that $x(n)$ almost surely converges to $v(G)$
or an interval $\Omega(G)$ such that $x(n)$ almost surely converges to a point in $\Omega(G)$.
\end{conjecture}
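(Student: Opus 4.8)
The Conjecture is exactly the union of Theorems \ref{not_balanced_bipartite} and \ref{bipartite} with the case of non-regular balanced-bipartite $G$, so the plan is to make that reduction explicit and then describe how one would close the remaining gap. If $G$ is not balanced-bipartite, Theorem \ref{not_balanced_bipartite} produces the deterministic point $v(G)$; if $G$ is regular bipartite, Theorem \ref{bipartite} gives convergence to a point of $\Omega(G):=\Omega$. If $G$ is non-regular balanced-bipartite, split further according to Lemma \ref{BBNR}. When $F$ has no interior equilibrium, Corollary \ref{no_interior_equilibrium} already gives that $\Lambda$ is finite and $x(n)$ converges to an element of $\Lambda$ almost surely, so the only thing missing for the first alternative of the Conjecture is that this element is \emph{deterministic}, i.e. that there is a unique non-unstable equilibrium; the plan here is to adapt the Lyapunov argument of Lemmas \ref{unique_stable_equilibrium} and \ref{deterministic_inequality}, the point being that the step where "not balanced-bipartite" was used — deducing $\eps=0$ from $\eps_i+\eps_j=0$ on every edge — must be replaced by one that exploits the sign constraints $\eps_i\ge 0$ for $i\in Z$ (which, since $w$ is a boundary equilibrium here, do kill the offending $\pm l$-direction unless $Z$ lies entirely on one side of the bipartition, a case to be treated by hand). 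When instead $F$ has an interval $\Omega(G):=\overline{\Lambda\cap{\rm int}(\Delta)}$ of interior equilibria, this is the genuinely hard case, and the rest of the plan concerns it.

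For that case I would replicate Section \ref{section_bipartite} almost verbatim. The limit set of the interpolated process $x(t)$ is a connected subset of $\Lambda$ (Proposition \ref{limit_set_theorem}); since $L|_{\Delta_S}$ is strictly concave for every $S\neq[m]$ (proved inside Corollary \ref{no_interior_equilibrium}), $\Lambda$ is the segment $\Omega(G)$ together with finitely many other equilibria, each at positive distance from $\Omega(G)$. Hence the limit set is either one of those finitely many points — and then $x(n)$ converges to a point of $\Lambda$, almost surely a non-unstable one by Lemma \ref{zero probability unstable}, and the only remaining issue is again the uniqueness statement above — or it lies in $\Omega(G)$, in which case, unless it is reduced to one of the two endpoints (where $x(t)$ already converges), being connected it must contain a point of ${\rm int}(\Omega(G))$. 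In this last situation I would fix a compact subinterval $J\subset{\rm int}(\Omega(G))$ containing such an accumulation point, form a neighbourhood $U$ as in (\ref{equation 1}) carrying the invariant-manifold foliation $\{\mathcal F_x\}_{x\in U}$, the $2$-Lipschitz projection $\pi:U\to\Omega(G)$, and the exponential contraction (\ref{equation hyperbolicity}); then Lemma \ref{exponential_approximation} together with the iteration of Lemma \ref{iteration_lemma} — whose proofs use only the foliation and the exponential estimates, not the regularity of $G$ — show that $\pi(x_k)$ converges and $d(x_k,\pi(x_k))\to 0$, hence $x(t)$ converges to a point of $J\subset\Omega(G)$. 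The only ingredient in this scheme not already available is the analogue of Lemma \ref{lemma hyperbolic attractor}: for every $v\in{\rm int}(\Omega(G))$, $0$ is a simple eigenvalue of $JF(v)$ and all other eigenvalues have negative real part, uniformly on compact subintervals.

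This eigenvalue statement, singled out by the authors as the obstruction, is the main difficulty. The natural attack uses that at an interior equilibrium $v$ one has $\partial L/\partial v_i|_v=0$ for all $i$, so by the formula for $JF$ in the proof of Lemma \ref{equilibrium_v_Lyapunov}, $JF(v)=D_v\,{\rm Hess}\,L(v)$ with $D_v={\rm diag}(v_1,\ldots,v_m)$ and all $v_i>0$; conjugating by $D_v^{1/2}$ replaces $JF(v)$ by the symmetric matrix $D_v^{1/2}\,{\rm Hess}\,L(v)\,D_v^{1/2}$, which is negative semidefinite by concavity of $L$, with kernel the image under $D_v^{-1/2}$ of $\{h:h_i+h_j=0\ \forall\{i,j\}\in E\}$ — a single line (the tangent direction of $\Omega(G)$) because $G$ is connected and bipartite. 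Formally this already yields a simple zero eigenvalue and otherwise real negative eigenvalues, with continuity in $v$ giving uniformity on compacta; what remains delicate, and is presumably what keeps the statement a conjecture, is to make the passage to the smooth invariant foliation clean near the two endpoints of $\Omega(G)$, where extra coordinates vanish and the normally hyperbolic structure degenerates (as at $v_{\pm\infty}$ in the regular case, but now without the explicit block form of $JF$ coming from the bipartite adjacency matrix to fall back on), and — separately — to settle the uniqueness of the non-unstable equilibrium in the no-interior-equilibrium case. Closing both of these would upgrade the Conjecture to a theorem; the endpoint analysis of $\Omega(G)$ is the step I expect to be the real obstacle.
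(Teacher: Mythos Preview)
The statement is labeled a Conjecture in the paper and is not proved there; the authors explicitly say they cannot establish the analogue of Lemma~\ref{lemma hyperbolic attractor} in the non-regular balanced-bipartite case, and Remark~\ref{final_remark} records that the conjecture was subsequently confirmed in \cite{lima2014completion}. There is therefore no paper proof to compare your proposal against. Your write-up is a roadmap with acknowledged gaps, and as such it is well organized; the comments below concern where it over- and under-estimates the difficulty.

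Your eigenvalue argument is correct and in fact closes what the paper flags as the main obstruction. At an interior equilibrium $v$ one has $JF(v)=D_v\,{\rm Hess}\,L(v)$, the conjugate $D_v^{1/2}\,{\rm Hess}\,L(v)\,D_v^{1/2}$ is symmetric negative semidefinite by concavity of $L$, and since $(h,{\rm Hess}\,L(v)\,h)=-\tfrac{1}{N}\sum_{\{i,j\}\in E}(h_i+h_j)^2/(v_i+v_j)^2$, the kernel of ${\rm Hess}\,L(v)$ is exactly $\{h:h_i+h_j=0\ \forall\{i,j\}\in E\}$, one line spanned by $l$ for any connected bipartite $G$. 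This is the same inner-product trick used in the proof of Lemma~\ref{equilibrium_v_Lyapunov}, and it yields the analogue of Lemma~\ref{lemma hyperbolic attractor} outright, with the required uniformity on compact subintervals by continuity. Conversely, your worry about the endpoints of $\Omega(G)$ is misplaced: the proof of Theorem~\ref{bipartite} already handles $v_{\pm\infty}$ by treating convergence to them as a separate case and invoking the invariant foliation only on a compact $J\subset{\rm int}(\Omega)$, and nothing in that portion of Section~\ref{section_bipartite} uses regularity of $G$.

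The genuine gap you underplay is elsewhere. When $\Omega(G)$ is a nondegenerate interval, connectedness of the limit set (Proposition~\ref{limit_set_theorem}) forces it to be either a single isolated point of $\Lambda\setminus\overline{\Omega(G)}$ or a connected subset of $\overline{\Omega(G)}$ --- but the Conjecture's dichotomy demands that one of these alternatives hold almost surely. Your appeal to ``the uniqueness statement above'' does not deliver this: even a unique non-unstable isolated equilibrium could coexist with convergence into the interval on a set of positive probability, and then neither alternative of the Conjecture holds. What is actually needed is that every equilibrium in $\Lambda\setminus\overline{\Omega(G)}$ is unstable, so that Lemma~\ref{zero probability unstable} pushes the limit set into $\overline{\Omega(G)}$ almost surely. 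This can be obtained by running the Lyapunov argument of Lemma~\ref{unique_stable_equilibrium} with $w\in{\rm int}(\Omega(G))$: the analogue of Lemma~\ref{deterministic_inequality} then reads $f(v)\ge N$ with equality iff $v\in\overline{\Omega(G)}$ (equality in \eqref{equality epsilon} now cuts out the line through $w$ along $l$ rather than the single point $w$), whence $H(v(t))$ is strictly increasing along interior orbits off $\Omega(G)$ and no non-unstable equilibrium can lie outside $\overline{\Omega(G)}$. With this supplied, your eigenvalue computation plus the Section~\ref{section_bipartite} shadowing finishes the interval case; the no-interior-equilibrium case still requires the separate uniqueness argument you sketch.
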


\begin{remark} \label{final_remark}
We just learned that the convergence of $x(n)$ for non-regular balanced-bipartite graphs was proved in \cite{lima2014completion}, and hence Conjecture \ref{the_conjecture} was confirmed.
\end{remark}

\section{Acknowledgements}

The authors are thankful to Michel Bena\"im, Itai Benjamini and Pascal Maillard for many enlightening discussions, as well as Ofer Zeitouni for substantial help in the proof of Theorem \ref{bipartite}. During the preparation of this manuscript, J.C. was a student and C.L. was a Postdoctoral Fellow at
the Weizmann Institute of Science. Both authors were supported by the ISF. Finally, the authors would like to thank Yuri Lima for suggesting the simpler proof of Theorem \ref{bipartite} presented here.

\bibliography{convergence_at_linearity_bib}

\end{document}